\declaretheorem[name=Theorem]{thm}
\declaretheorem[name=Proposition]{prop}
\declaretheorem[name=Lemma, style=definition]{lemma}
\declaretheorem[name=Corollary, style=definition]{cor}
\declaretheorem[name=Remark, numbered=no, style=remark]{remark}
\tikzset{
	pf/.style={commutative diagrams/.cd, every arrow, every label},
	surj/.style=commutative diagrams/two heads,
	inj/.style=commutative diagrams/hook,
	gl/.style=commutative diagrams/equal,
	mat/.style={matrix of math nodes, commutative diagrams/.cd, every cell},
	dr/.style={matrix of math nodes, commutative diagrams/.cd, every cell, column sep=small},
	seq/.style={matrix of math nodes, commutative diagrams/.cd, every cell,column sep=small}
}
\newenvironment{diag*}
	{\[\begin{tikzpicture}[commutative diagrams/.cd, every diagram]}
	{\end{tikzpicture}\]\ignorespacesafterend}
\newenvironment{diag}
	{\begin{equation}\begin{tikzpicture}[commutative diagrams/.cd, every diagram]}
	{\end{tikzpicture}\end{equation}\ignorespacesafterend}
\DeclareMathOperator{\Rm}{R}
\DeclareMathOperator{\diverg}{div}
\DeclareMathOperator{\tr}{Tr}
\DeclareMathOperator{\Diff}{Diff}
\DeclareMathOperator{\Sym}{Sym}
\DeclareMathOperator{\Aut}{Aut}
\DeclareMathOperator{\Spin}{Spin}
\DeclareMathOperator{\SO}{SO}
\DeclareMathOperator{\grad}{grad}
\DeclareMathOperator{\End}{End}
\DeclareMathOperator{\id}{Id}
\DeclareMathOperator{\Cl}{Cl}
\DeclareMathOperator{\Ima}{Im}
\newcommand{\ACM}{J_M} 
\newcommand{\SpinPFB}{P_{\Spin}(M,g)} 
\newcommand{\SpinorBundleM}{S}  
\newcommand{\SpinorBundleP}{\Sigma} 
\newcommand{\ACSBP}{J_\SpinorBundleP} 
\newcommand{\CliffordBundleM}{{\operatorname{Cl}\left(M, -g\right)}}
\newcommand{\dd}{\mathop{}\!\mathrm{d}}
\newcommand{\A}{\mathbb{A}}
\newcommand{\D}{\slashed{D}}
\newcommand{\p}{\partial}
\newcommand{\al}{\alpha}
\newcommand{\be}{\beta}
\newcommand{\na}{\nabla}
\newcommand{\R}{\mathbb{R}}
\newcommand{\C}{\mathbb{C}}
\newcommand{\pd}{\slashed{\partial}}
\newcommand{\map}{\phi}
\newcommand{\dv}{\dd{vol}}
\newcommand{\diff}{f}
\newcommand{\Lie}{\mathcal{L}}
\newcommand{\dt}[1]{\frac{\dd}{\dd{t}}\Big|_{t=#1}}
\newcommand{\ov}[1]{\overline{#1}}
\newcommand{\gs}{g_s}
\newcommand{\Met}{\mathfrak{Met}}
\newcommand{\gco}{g^\vee}
\title[Symmetries and conservation laws]{Symmetries and conservation laws of\\ a nonlinear sigma model with gravitino}
\begin{document}

\author{Jürgen Jost, Enno Keßler, Jürgen Tolksdorf, Ruijun Wu, Miaomiao Zhu}

\address{Max Planck Institute for Mathematics in the Sciences\\Inselstr. 22--26\\D-04103 Leipzig, Germany}
	\email{jjost@mis.mpg.de}

\address{Max Planck Institute for Mathematics in the Sciences\\Inselstr. 22--26\\D-04103 Leipzig, Germany}
	\email{Enno.Kessler@mis.mpg.de}

\address{Max Planck Institute for Mathematics in the Sciences\\Inselstr. 22--26\\D-04103 Leipzig, Germany}
	\email{Juergen.Tolksdorf@mis.mpg.de}

\address{Max Planck Institute for Mathematics in the Sciences\\Inselstr. 22--26\\D-04103 Leipzig, Germany}
	\email{Ruijun.Wu@mis.mpg.de}

\address{School of Mathematical Sciences, Shanghai Jiao Tong University\\Dongchuan Road 800\\200240 Shanghai, P.R.China}
	\email{mizhu@sjtu.edu.cn}

\thanks{Miaomiao Zhu was supported in part by the National Science Foundation of China (No. 11601325).}

\date{\today}

\begin{abstract}
	We show that the action functional of the nonlinear sigma model with gravitino considered in~\cite{jost2016regularity} is invariant under rescaled conformal transformations, super Weyl transformations and diffeomorphisms.
	We give a careful geometric explanation how a variation of the metric leads to the corresponding variation of the spinors.
	In particular cases and despite using only commutative variables, the functional possesses a degenerate super symmetry.
	The corresponding conservation laws lead to a geometric interpretation of the energy-momentum tensor and supercurrent as holomorphic sections of appropriate bundles.
\end{abstract}

\keywords{nonlinear sigma model, gravitino, Noether's theorem, symmetry, energy-momentum tensor, supercurrent, Dirac-harmonic map}

\maketitle


\section{Introduction}

The main motivation for the introduction of the two-dimensional nonlinear supersymmetric sigma model in quantum field theory, or more specifically super gravity and super string theory, are its symmetries, see for instance~\cite{brink1976locally, deser1976complete}.
Furthermore, as argued in~\cite{kessler2016functional}, the functional is determined by its symmetries together with suitable bounds on the order of its Euler--Lagrange equations.
While super symmetric models are usually formulated using anti-commutative variables, in~\cite{jost2016regularity} an analogue of the two-dimensional nonlinear supersymmetric sigma model using only commutative variables was introduced.
Here we would like to give a detailed geometric account of the symmetries of the purely commutative model considered in~\cite{jost2016regularity}.

In physics as well as in geometry, symmetries of a system always have important implications.
In particular, differentiable symmetries of a system give rise to conservation laws---this is basically what the famous Noether's theorem states, see e.g.~\cite[Chapter 1]{jost1999calculus}, \cite[Section 2.3.2]{jost2009geometry} and~\cite{gelfand1963calculus}.
To see how Noether's principle helps, let us take a closer look at the harmonic map theory as an example; this will also be the prototype for our work.

The \emph{Dirichlet energy} for a smooth map $\map\colon (M,g)\to (N,h)$ from a Riemannian surface to a  Riemannian manifold is given by
\begin{equation}
	E(\map)=\frac{1}{2}\int_M |\dd\map|_{\gco\otimes\map^*h}^2\dv_g
	=\frac{1}{2} \int_M g^{\al\be}\frac{\p\map^i}{\p x^\al}\frac{\p \map^j}{\p x^\be}h_{ij}(\map(x))\sqrt{\det{g(x)}}\dd x
\end{equation}
where $\gco$ denotes the dual metric on the cotangent bundle and $\map^*h$ the induced metric on the pullback bundle $\map^*TN$.
The critical points of the energy functionals are named \emph{harmonic maps} in~\cite{eells1964harmonic}, the study of which has been among the central topics in mathematics for a long time.

Essential in the study of harmonic maps are the diffeomorphism invariance
\footnote{We denote by $g_\diff$ the non-negative definite symmetric 2-form defined by \begin{equation} g_\diff(X,Y)(x)\coloneqq g_{\diff(x)}(T\diff(X),T\diff(Y)) \end{equation} for any $x\in M$ and any $X,Y\in\Gamma(TM)$, which is commonly referred to as ``pullback metric'' when $f$ is an immersion in differential geometry.
	We reserve the symbol \(\diff^*g\) for the induced metric on the pullback bundle \(\diff^*TM\).}
of the Dirichlet energy
\begin{equation}
	E(\map\circ\diff; g_\diff)=E(\map;g), \qquad \forall \diff\in \Diff(M),
\end{equation}
and as a peculiarity in the case that the domain is two dimensional, the conformal invariance
\begin{equation}
	E(\map; e^{2u}g) = E(\map; g), \qquad \forall u\in C^\infty(M).
\end{equation}
Those two symmetries link the theory of harmonic maps on a Riemann surface to the Teichmüller theory, see~\cite{jost2013compact}.

The conservation laws corresponding to diffeomorphism invariance and conformal invariance are expressed in terms of the \emph{energy-momentum tensor}.
The energy-momentum tensor is defined as the variation of the Dirichlet energy with respect to the metric, that is, for a smooth family $(g_t)$ of Riemannian metrics on $M$,
\begin{equation}
	\begin{split}
		\frac{\dd}{\dd t} 2E(\map,g_t)
		&=-\frac{1}{2}\int_M \left\langle\frac{\p g_t}{\p t}, T(\map;g_t)\right\rangle \dv_{g_t}.
	\end{split}
\end{equation}
Explicitly the energy-momentum tensor is a symmetric 2-tensor $T(\map;g_t)=T_{\al\be}\dd x^\al\otimes \dd x^\be$ given by
\begin{equation}
		\frac{1}{2}T_{\al\be} =h_{ij}(\map)\frac{\p\map^i}{\p x^\al}\frac{\p\map^j}{\p x^\be}-\frac{1}{2}|\dd\map|^2_{\gco_t\otimes\map^*h} g_{t\al\be}.
\end{equation}
The energy-momentum tensor is symmetric and traceless---though this is clear from the expression, it roots in the fact that we are taking variations in the symmetric $2$-tensors and that the energy functional is conformally invariant in dimension two.
Actually, consider the conformal family $(g_t=e^{2tu}g)$, the conformal invariance says that $E(\map; e^{2tu}g)=E(\map;g)$, and consequently
\begin{equation}
	\begin{split}
		0&=\dt{0} 2E(\map, e^{2tu}g)
		=-\frac{1}{2}\int_M \left\langle 2u g,T(\map;g)\right\rangle\dv_g\\
		&=-\int_M u\cdot\tr_g\left(T(\map;g)\right) \dv_g,
	\end{split}
\end{equation}
which is equivalent to say that $\tr_g(T(\map;g))=0$ since the function $u$ can be arbitrary.

The conservation law corresponding to the diffeomorphism invariance of the Dirichlet energy is that the energy-momentum tensor is divergence-free on shell
\footnote{Here we use the phrase “on shell”, as it is common in physics, to say that the Euler--Lagrange equations are satisfied.}.
Indeed, for a differentiable family $(f_t)$ of diffeomorphisms of $M$ with $f_0=\id_M$ and a harmonic map \(\map\), we have
\begin{equation}
		0=\dt{0} E(\map\circ\diff_t;g_{\diff_t}) = -\int_M\frac{1}{2} \left\langle\frac{\p}{\p t} g_{\diff_t} \Big|_{t=0}, T(\map;g)\right\rangle \dv_{g},
\end{equation}
When we denote the generator of \(\diff_t\) by $X\in \Gamma(TM)$ the differential of the metric is given by the following Lie-derivative
\begin{equation}
	\dt{0} g_{\diff_t}=\Lie_X g.
\end{equation}
Consequently, when the map $\map$ is harmonic, we have
\begin{equation}
	0=\int_M \langle \Lie_X g, T(\map;g) \rangle \dv_g =-2\int_M \langle X, \diverg_g (T(\map;g))\rangle\dv_g,
\end{equation}
where the second equality is recalled in Section~\ref{sect:formal divergence}.
As the vector field $X$ could be arbitrary, it follows that the energy-momentum tensor is divergence-free.

The fact that the energy-momentum tensor is symmetric, traceless and divergence-free has important geometric and analytic consequences.
For instance, the space of symmetric, traceless and divergence-free 2-tensors can be identified with the space of holomorphic quadratic differentials.
Using existence results of harmonic maps and the Theorem of Riemann--Roch, one can then show that the Teichmüller space is a ball of dimension \(6p-6\), where \(p\) is the genus of the surface.

The action functional in~\cite{jost2016regularity} extends the Dirichlet energy to include spinor fields: a Dirac-term in the field \(\psi\) and additional terms inolving the gravitino field \(\chi\).
While the Euler--Lagrange equations for \(\psi\) yield a Dirac-type equation, the gravitino is more considered as a parameter in the theory, similar to the metric \(g\).
By analogy to the models studied in physics we expect conformal invariance, super Weyl invariance (an analogue to conformal invariance affecting the gravitino) and diffeomorphism invariance.
We will show that the functional introduced in~\cite{jost2016regularity} possesses indeed the aforementioned symmetries.
The major difficulty in the study of those symmetries is that spinors depend on the metric.
In order to express the variation of the spinors in terms of the variation of the metric we use the methods developped in~\cite{bourguignon1992spineurs}.
In contrast to the original physical model, the functional studied here possesses only a “degenerate supersymmetry” in special cases.

The conservation laws corresponding to those four symmetries are expressed in terms of the energy-momentum tensor and the supercurrent, which is the variation of the action with respect to the gravitino.
We show that rescaled conformal and super Weyl invariance lead to algebraic constraints on the energy-momentum tensor \(T\) and the supercurrent \(J\).
Diffeomorphism invariance and degenerate supersymmetry yield equations for the divergences of \(T\) and \(J\).
All four symmetries together allow for an interpretation of \(T\) and \(J\) as holomorphic sections of appropriate bundles.
This is in full analogy to the super geometric setting, where \(T\) and \(J\) constitute tangent vectors to the moduli space of so-called super Riemann surfaces, see~\cite{jost2014super, kessler2016thesis}.

The article is organized as follows:
First we consider a Dirac action as a toy model, as well as to set up the notation for later use.
The Weyl symmetry and the diffeomorphism invariance are checked and the corresponding conservation laws are obtained.
Then we put the main effort to study the nonlinear sigma model with gravitino in~\cite{jost2016regularity}, and the four kinds of symmetries mentioned above are discussed.
We give explicit computation of the energy-momentum tensor and supercurrent in that case.
The corresponding conservation laws are derived and explained.
Finally we make a supplement about the divergence operators, both on symmetric 2-tensors and on spinors, which are frequently used in the context.

\section{Dirac action and its conservation laws}
In this section we consider a Dirac action.
While various Dirac actions play a prominent role in physics, we will use the Dirac action as an elaborate toy model.
At the example of the Dirac action we will set up the notation and theory necessary to study symmetries and conservation laws of a sigma-model involving spinors.

The geometric setup is as in~\cite{jost2016regularity}.
Let $(M,g)$ be a closed oriented Riemannian surface and let $\xi\colon P_{\Spin}(M,g)\to P_{\SO}(M,g)$ be a spin structure on $(M,g)$.
The irreducible representation of the Clifford bundle $\Cl_{0,2}$ induces the real spin representation $\mu\colon \Spin(2)\to GL(V)$ where $V$ is a representation space and is isomorphic to $\R^4$.
From this one can form the associated spinor bundle
\begin{equation}
	S_g\coloneqq P_{\Spin}(M,g)\times_\mu V.
\end{equation}
Note that here by the lower index in $S_g$ we emphasize the dependence on the Riemannian metric~$g$.
There are a canonical fiberwise real inner product denoted by $\gs$, and a canonical metric connection $\na^s$.
The connections on $\Gamma(S_g)$ and on the algebra bundle $\Cl(M,-g)$ satisfy a Leibniz rule, see e.g.~\cite{lawson1989spin}.
Choosing an isomorphism between $V$ and $\Cl_{0,2}$ we get a Clifford map denoted by $\gamma\colon TM\to \End(S_g)$ which satisfies the Clifford relation
\begin{equation}
	\gamma(X)\gamma(Y)+\gamma(Y)\gamma(X)=-2g(X,Y) \qquad \textnormal{ for all } X, Y \in \Gamma(TM).
\end{equation}
Sections of $S_g$ will be referred to as (pure) spinors, which is used to describe matter fields with non-integer spins in physics.

On the spinor bundle $S_g$ there is a spin Dirac operator $\pd_g\colon \Gamma(S_g)\to \Gamma(S_g)$ which is defined by
\begin{equation}
	\pd_g \sigma \coloneqq \gamma(e_\al)\na^s_{e_\al}\sigma \qquad \textnormal{ for  } \sigma\in \Gamma(S_g),
\end{equation}
where $(e_\al)$ is a local oriented $g$-orthonormal frame.
It is easily to check that $\pd_g$ is well-defined, that is, independent of the \(g\)-orthonormal frame.
This Dirac operator is an elliptic self-adjoint operator, and has a real spectrum which is unbounded in either side in $\R$.
A spinor $q\in\Gamma(S)$ is called harmonic if it is in the kernel of $\pd_g$, i.e.~$\pd_g q=0$.
One can refer to~\cite{hitchin1974harmonic, bar1998harmonic} for more knowledge about harmonic spinors.

Here we consider a \emph{Dirac action}
\footnote{Note that in most applications in physics a mass term is added to the Lagrangian.}
as a functional defined on spinors and Riemannian metrics by
\begin{equation}
	DA(\sigma;g)\coloneqq \int_M \gs(\sigma,\pd_g\sigma)\dv_g \qquad \textnormal{ for } g\in \Met(M) \textnormal{ and for }  \sigma\in \Gamma(S_g)
\end{equation}
where $\Met(M)$ denotes the space of all Riemannian metrics on $M$.
From the facts above on the spectrum about $\pd_g$ one knows that this action functional cannot be bounded from either side.
Recall that the spin Dirac operator $\pd_g$ is self-adjoint with respect to the $L^2$-inner product.
An easy calculation shows that for a smooth family ${(\sigma_t)}_t$ of spinors,
\begin{equation}
	\dt{0} DA(\sigma_t;g)=2\int_M \left\langle \left(\dt{0}\sigma_t\right), \pd_g \sigma \right\rangle \dv_g.
\end{equation}
Thus the Euler--Lagrange equation for the Dirac action is $\pd_g \sigma=0$, that is, the critical spinors are the harmonic spinors.

\subsection{Rescaled conformal invariance of the Dirac action}%
\label{sect:ConformalInvarianceDiracAction}
First observe that mathematically rigorously, one can not keep the spinor $\sigma$ fixed while varying the metric, because the spin structure depends on the metric $g$ and so do the spinor bundle $S_g$ and the Dirac operator $\pd_g$.
To overcome this difficulty we take the approach developed in~\cite{bourguignon1992spineurs}.

Let \(g\) and \(g'\) be two Riemannian metrics on \(M\).
There is a unique self-adjoint endomorphism \(H\in\End(TM)\) such that \(g'(\cdot, \cdot) = g(H\cdot, \cdot)\).
Thus if we set
\begin{equation}
	b\equiv b^g_{g'} \coloneqq H^{-1/2} \in \Aut(TM),
\end{equation}
then $b\colon (TM,g)\to (TM, g')$ is an isometry of Riemannian vector bundles.
More explicitly, $ g(\cdot,\cdot)=g'\left(b(\cdot),b(\cdot)\right)$.

Next we lift the isomorphism $b$ to the spin level.
Since $b\colon (TM,g)\to (TM,g')$ is $\SO(2)$-equivariant, it defines a principal bundle isomorphism
\begin{equation}
	b\colon P_{\SO}(M,g)\to P_{\SO}(M,g')
\end{equation}
which lifts to an isomorphism $\tilde{b}\colon P_{\Spin}(M,g)\to P_{\Spin}(M,g')$ on the spin level which covers $b$.
Here it is important to assume that \(P_{\Spin}(M, g')\) corresponds to the same topological spin structure as~\(P_{\Spin}(M, g)\).
Let now \(S_g\) and \(S_{g'}\) be spinor bundles associated to \(P_{\Spin}(M, g)\) and \(P_{\Spin}(M, g')\) via the same representation \(\mu\colon \Spin\to SO(V)\).
Since $\tilde{b}$ is $\Spin(2)$-equivariant, it induces an isometry between the spinor bundles as Riemannian vector bundles, denoted by
\begin{equation}
	\beta\equiv \beta^g_{g'}\colon S_g\to S_{g'},
\end{equation}
which is to say, $ g_s(\cdot,\cdot)=g_s'\left(\beta(\cdot),\beta(\cdot)\right)$.
Moreover, note that for a vector $v\in TM$ and a spinor $\sigma\in S_g$,
\begin{equation}\label{eq:compatible isometries}
	\beta(\gamma(v)\sigma)=\gamma'(b(v))\beta(\sigma).
\end{equation}
where $\gamma'$ denotes the Clifford map with respect to the metric $g'$.

Next we consider the conformal behavior of the Dirac action.
First recall the following Proposition:
\begin{prop}[{see~\cite[Prop. 1.3.10]{ginoux2009dirac}}]%
\label{conformal transformation of dirac operator}
	Let $u\in C^\infty(M)$, then $g'=e^{2u}g$ is a Riemannian metric conformal to $g$.
	The spin Dirac operators \(\pd_g\) and \(\pd_{g'}\) satisfy
	\begin{equation}
		\pd_{g'}\beta(\sigma) =e^{-u}\beta\left(\pd_g \sigma+\frac{m-1}{2}\gamma(\grad(u))\sigma\right),
	\end{equation}
	where $m=\dim M$.
	Moreover, after a rescaling by $e^{-\frac{m-1}{2}u}$, the Dirac operator transforms homogeneously:
	\begin{equation}
		\pd_{g'}\beta(e^{-\frac{m-1}{2}u}\sigma)
		=e^{-\frac{m+1}{2}u}\beta(\pd_g\sigma).
	\end{equation}
\end{prop}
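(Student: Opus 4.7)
The plan is to implement the Bourguignon--Gauduchon machinery explicitly in the conformal case and reduce the identity to a computation of the connection 1-form under the isometry $b$.

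First I would specialize the endomorphism $H$ with $g'(\cdot,\cdot) = g(H\cdot,\cdot)$ to the conformal case $g'=e^{2u}g$: then $H=e^{2u}\id_{TM}$ and $b=e^{-u}\id_{TM}$. Consequently, if $(e_\alpha)$ is a local oriented $g$-orthonormal frame, then $(e'_\alpha):=b(e_\alpha)=e^{-u}e_\alpha$ is a local oriented $g'$-orthonormal frame, and the corresponding spin lifts satisfy $\tilde{b}(\tilde e_\alpha) = \tilde e'_\alpha$. This trivializes the bundle isomorphism $\beta$: a spinor with coefficients $\sigma^A$ in the frame induced by $(e_\alpha)$ has $\beta(\sigma)$ with the same coefficients in the frame induced by $(e'_\alpha)$. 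Moreover the compatibility \eqref{eq:compatible isometries} specializes to $\gamma'(e'_\alpha)\beta(\sigma)=\beta(\gamma(e_\alpha)\sigma)$.

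Next I would compare the Levi-Civita connections. Koszul's formula yields
\begin{equation}
\na'_X Y = \na_X Y + X(u)\,Y + Y(u)\,X - g(X,Y)\,\grad_g u.
\end{equation}
Passing to the $g'$-orthonormal frame $(e'_\alpha)$ and computing the connection 1-form $\omega'^{\alpha\beta}$ in that frame in terms of $\omega^{\alpha\beta}$ in the frame $(e_\alpha)$, a short bookkeeping gives
\begin{equation}
\omega'^{\alpha\beta}(e'_\gamma) = e^{-u}\bigl(\omega^{\alpha\beta}(e_\gamma) + \delta^{\alpha}_{\gamma}\,e_\beta(u) - \delta^{\beta}_{\gamma}\,e_\alpha(u)\bigr).
\end{equation}
Under the standard Lie-algebra isomorphism $\mathfrak{so}(m)\to\mathfrak{spin}(m)$, $E_{\alpha\beta}\mapsto\tfrac14[\gamma(e_\alpha),\gamma(e_\beta)]$, this lifts to the identity
\begin{equation}
\beta^{-1}\na'^{s}_{X}\beta(\sigma) = \na^{s}_{X}\sigma - \tfrac12\gamma(X)\gamma(\grad_g u)\sigma - \tfrac12 X(u)\sigma,
\end{equation}
valid for all $X\in\Gamma(TM)$.

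Then I would apply the Dirac operator in the $g'$-frame:
\begin{equation}
\pd_{g'}\beta(\sigma) = \gamma'(e'_\alpha)\na'^{s}_{e'_\alpha}\beta(\sigma) = e^{-u}\beta\!\left(\gamma(e_\alpha)\Bigl[\na^s_{e_\alpha}\sigma - \tfrac12\gamma(e_\alpha)\gamma(\grad_g u)\sigma - \tfrac12 e_\alpha(u)\sigma\Bigr]\right),
\end{equation}
using both the rescaling $e'_\alpha=e^{-u}e_\alpha$ and the intertwining \eqref{eq:compatible isometries}. The first term yields $e^{-u}\beta(\pd_g\sigma)$. For the second, the Clifford relation $\gamma(e_\alpha)\gamma(e_\alpha)=-m\cdot\id$ combined with $\gamma(e_\alpha)\gamma(X)+\gamma(X)\gamma(e_\alpha)=-2g(e_\alpha,X)\id$ converts the sum $\gamma(e_\alpha)\gamma(e_\alpha)\gamma(\grad_g u)$ into $-m\,\gamma(\grad_g u)$. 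The third term contributes $-\tfrac12\gamma(\grad_g u)$ since $e_\alpha(u)\gamma(e_\alpha)=\gamma(\grad_g u)$. Collecting these three contributions yields the coefficient $\tfrac{m-1}{2}$ in front of $\gamma(\grad_g u)\sigma$, which is exactly the claimed formula. The rescaled identity then follows immediately: apply the formula to $e^{-\frac{m-1}{2}u}\sigma$ and use $\pd_g(e^{-\frac{m-1}{2}u}\sigma) = e^{-\frac{m-1}{2}u}\pd_g\sigma - \tfrac{m-1}{2}e^{-\frac{m-1}{2}u}\gamma(\grad_g u)\sigma$; the two $\gamma(\grad_g u)$ terms cancel.

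The main obstacle is the bookkeeping in the third step: tracking the correct correction terms in $\beta^{-1}\na'^s\beta$ requires careful use of the $\mathfrak{so}\to\mathfrak{spin}$ lift and the correct sign and normalization conventions for the Clifford relation. Once that identity is in hand, everything else is Clifford algebra. All other ingredients (the existence of $b$, its spin lift, and the intertwining \eqref{eq:compatible isometries}) have been established in the preceding paragraphs.
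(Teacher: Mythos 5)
Your proof is correct. Note that the paper itself does not prove this proposition at all --- it simply cites Ginoux, Prop.\ 1.3.10 --- so there is no in-paper argument to compare against; what you have written is essentially the standard proof of that cited result (the Bourguignon--Gauduchon computation in the conformal case). The key steps all check out under the paper's conventions: $b=e^{-u}\id_{TM}$, the conformal change of the Levi-Civita connection, the lifted identity $\beta^{-1}\na'^{s}_{X}\beta(\sigma)=\na^{s}_{X}\sigma-\tfrac12\gamma(X)\gamma(\grad u)\sigma-\tfrac12 X(u)\sigma$ (which is consistent with the Clifford relation $\gamma(X)\gamma(Y)+\gamma(Y)\gamma(X)=-2g(X,Y)$ used here, since the connection-form difference contributes $\tfrac14\left[\gamma(\grad u)\gamma(X)-\gamma(X)\gamma(\grad u)\right]$), and the bookkeeping $\tfrac{m}{2}-\tfrac12=\tfrac{m-1}{2}$ from the three terms after contracting with $\gamma(e_\al)$. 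The rescaled identity then indeed follows from the Leibniz rule $\pd_g(f\sigma)=\gamma(\grad f)\sigma+f\pd_g\sigma$ exactly as you say. The only point deserving care is the one you flag yourself: the sign convention in the connection one-form (whether $\omega^{\al\be}(X)=g(\na_X e_\be,e_\al)$ or its negative) must be matched consistently with the $\tfrac14$-normalization of the spin lift, but your stated intermediate identity is the correct one, so the argument is sound.
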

This implies that the dimension of the space of harmonic spinors is a conformal invariant.
Let us now specify to the surface case, $m=2$.
The map \(\beta\) allows us to transform the spinor according to the change of the metric.
However, by Proposition~\ref{conformal transformation of dirac operator} an additional rescaling is necessary to make the Dirac action invariant.
Indeed,
\begin{equation}\label{Dirac action-not conformal invariant}
	\begin{split}
		DA(\beta\sigma;e^{2u}g)
		&=\int_M e^{u}g_s(\sigma, \pd_g\sigma)\dv_g,
	\end{split}
\end{equation}
which is in general not equal to $DA(\sigma;g)$.
In contrast, if the rescaling by $e^{-\frac{m-1}{2}u}=e^{-\frac{1}{2}u}$ is taken into account the Dirac action is invariant:
\begin{equation}%
\label{eq:DiracActionRescaledConformalInvariance}
	DA(e^{-\frac{1}{2}u}\beta\sigma;e^{2u}g)
	=\int_M g_s'\left(e^{-\frac{1}{2}u}\beta\sigma, e^{-\frac{3}{2}u}\beta(\pd_g\sigma)\right) e^{2u}\dv_g
	=DA(\sigma;g).
\end{equation}
Hence, strictly speaking the Dirac action is not conformally invariant.
We will rather use the term “rescaled conformal invariance” for the invariance given in Equation~\eqref{eq:DiracActionRescaledConformalInvariance}.

\subsection{The energy-momentum tensor of the Dirac action}
We need to calculate the variation of the Dirac action with respect to the Riemannian metric.
To this end we need a parametrized version of the theory presented in the last subsection.
Let ${(g_t)}_t$ be a smooth family of Riemannian metrics parametrized by $t$ in a neighborhood of zero in $\R$ such that $g_0=g$.
The $t$-derivative at $t=0$ is a smooth symmetric $2$-form, say~$k\in \Gamma(\Sym(T^*M\otimes T^*M))$.
Conversely, any $k\in\Gamma(\Sym(T^*M\otimes T^*M))$ can be represented in this way, for instance, ${(g_t\coloneqq g+tk)}_t$ is such a family for $|t|$ small enough.

As before there is a unique family of self-adjoint endomorphisms \({(H_t)}_t\subset\End(TM)\) such that \(g_t(\cdot, \cdot) = g(H_t\cdot, \cdot)\).
We will denote by \(b_t\equiv b^g_{g_t}\) and \(\beta_t\equiv\beta^g_{g_t}\colon S_g\to S_{g_t}\).
Notice that
\begin{equation}\label{variation of frame}
	\dt{0} b_t= -\frac{1}{2} K\in \End(TM),
\end{equation}
where $K$ is the endomorphism associated to $k$ by metric dualization, which is also the $t$-derivative of $H_t$ at $t=0$.
Let $\{e_\alpha\}$ be a local oriented $g$-orthonormal frame, then $\{E_\al(t)=b_t(e_\al)\}$ is a local oriented $g_t$-orthonormal frame, and
\begin{equation}
	\dt{0} E_\al(t)=\dt{0} b_t(e_\al)=-\frac{1}{2} Ke_\al=-\frac{1}{2}K^\be_\al e_\be.
\end{equation}
We also need to consider the volume forms of different metrics: $\dv_{g_t}=\sqrt{\det H_t}\dv_g$.
The $t$-derivative at $t=0$ is
\begin{equation}
	\dt{0} \dv_{g_t} =\left(\dt{0}\sqrt{\det H_t}\right)\dv_g=\frac{1}{2}\tr_g(K) \dv_g.
\end{equation}

Transport the Dirac operator $\pd_{g_t}\colon \Gamma(S_{g_t})\to \Gamma(S_{g_t})$ via the isometry $\be_t$ to obtain a differential operator on~$\Gamma(S_g)$:
\begin{equation}
	\ov{\pd}_{g_t}\coloneqq \be_t^{-1}\circ \pd_{g_t}\circ \be_t\colon \Gamma(S_g)\to \Gamma(S_g).
\end{equation}
From~\cite{bourguignon1992spineurs} and~\cite{maier1997generic}, we know that
\begin{equation}
	\dt{0}\ov{\pd}_{g_t}=-\frac{1}{2} \gamma(e_\al)\na^s_{K(e_\al)}+\frac{1}{4} \gamma\left(\grad(\tr_g k)-{\diverg_g(k)}_\sharp\right),
\end{equation}
where the $g$-divergence $\diverg_g(k)$ is recalled in Section~\ref{sect:formal divergence}.

Now we are ready to derive the variation of the Dirac action for the metric $g$ while keeping the spinors essentially ``unchanged''.
The variation formula for metrics can be obtained in the following way:
\begin{equation}\label{Dirac action-energy-momentum}
	\begin{split}
		\dt{0} DA(\be_t\sigma;g_t)
		&=\dt{0} \int_M g_s(t)\left(\be_t\sigma,\pd_{g_t}\be_t\sigma\right)\dv_{g_t} \\
		&=\dt{0} \int_M g_s(\sigma,\be_t^{-1}\pd_{g_t}\be_t \sigma)\dv_{g_t} \\
		&=\dt{0} \int_M g_s(\sigma,\overline{\pd}_{g_t} \sigma) \dv_{g_t} \\
		&=-\frac{1}{2} \int_M \langle k, T(\sigma;g)\rangle\dv_g,
	\end{split}
\end{equation}
where $T(\sigma;g)=T_{\al\be}e^\al\otimes e^\be$ is the energy-momentum tensor, with coefficients
\begin{equation}
	T_{\al\be}
	=\frac{1}{2}\left\langle\sigma, \gamma(e_\al)\na^s_{e_\be}\sigma+\gamma(e_\be)\na^s_{e_\al}\sigma\right\rangle_{g_s}
		-\langle\sigma,\pd_g\sigma\rangle_{g_s} g_{\al\be}.
\end{equation}
By construction the energy-momentum tensor is symmetric, but not necessarily traceless; in fact,
\begin{equation}%
\label{Dirac action-trace of energy-momentum}
	\tr_g(T)=-\left\langle\sigma,\pd_g \sigma\right\rangle_{g_s}.
\end{equation}
This can be explained by the conservation law associated to the rescaled conformal invariance as follows.
\begin{equation}
	\begin{split}
		0 = \left.\frac{\dd}{\dd{t}}\right|_{t=0} DA(e^{-\frac12 tu}\beta \sigma, e^{2tu}g)
			&= \int_M 2\left<-\frac12 u \sigma, \pd\sigma\right> - \frac12\left<2u g, T\right> \dd{vol}_g \\
			&= -\int_M u\left(\left<\sigma, \pd\sigma\right> + \tr_g(T)\right) \dd{vol}_g
	\end{split}
\end{equation}
The conservation law to the rescaled conformal invariance prescribes the trace of the energy-momentum tensor.
As the Dirac action is in general not conformally invariant, the trace is non-zero.
Notice that on shell the energy-momentum tensor is traceless.

\subsection{Diffeomorphism invariance of the Dirac action}%
\label{sect:diffeomorphism invariance of Dirac action}
While diffeomorphism invariance of the Dirac action may seem obvious at first glance, we need a precise formulation to obtain the corresponding conservation law.

Let $f$ be a smooth diffeomorphisms of $M$.
Pull the metric $g$ back via $f$ to get a Riemannian metric $g_f$ on $TM$.
The differential $Tf$ of $f$ is an isometry of Riemannian vector bundles which covers the map $f$; that is, the following diagram commutes:
\begin{diag}
	\matrix[mat, column sep=small](m)
		{
		(TM, g_f) & (TM,g) \\
		M         & M      \\
		};
	\path[pf]
		(m-1-1) edge node{\(Tf\)} (m-1-2)
			edge (m-2-1)
		(m-2-1) edge node{\( f\)} (m-2-2)
		(m-1-2) edge (m-2-2)
	;
\end{diag}
This induces a map on the orthonormal frame bundles, which is also denoted by $T f$.
As $Tf$ is $\SO(2)$-equivariant, there exists a unique spin structure $P_{\Spin}(M,g_f)\to P_{\SO}(M,g_f)$ such that $Tf$ lifts to the corresponding principal $\Spin(2)$-bundles.
Indeed, as explained in~\cite[Theorem II.1.7]{lawson1989spin}, the spin structures on \(M\) are in one-to-one correspondence to elements of \(H^1(M, \mathbb{Z}_2)\) and \(P_{\Spin}(M, g_f)\) is given by the pullback of the cohomology class corresponding to \(P_{\Spin}(M, g)\).
Hence, in general, the topological spin structures corresponding to \(P_{\Spin}(M, g)\) and \(P_{\Spin}(M, g_f)\) do not need to coincide.
The situation is summarized the following commutative diagram in the left:
\begin{equation}%
\label{diag:PullbackSpinorBundle}
	\begin{tikzpicture}[commutative diagrams/.cd, every diagram]
		\matrix[mat,row sep=1.7em](m)
		{
			P_{\Spin}(M,g_f) & P_{\Spin}(M,g) \\
			P_{\SO}(M, g_f)  & P_{\SO}(M,g)  \\
			M                & M             \\
		};
		\path[pf]
		{
			(m-1-1) edge (m-2-1)
				edge node{\(\widetilde{Tf}\)} (m-1-2)
			(m-1-2) edge (m-2-2)
			(m-2-1) edge (m-3-1)
				edge node{\(Tf\)} (m-2-2)
			(m-2-2) edge (m-3-2)
			(m-3-1) edge node{\(f\)} (m-3-2)
		};
	\end{tikzpicture}
	\hspace{5em}
	\begin{tikzpicture}[commutative diagrams/.cd, every diagram]
		\matrix[mat,row sep=5em](m)
		{
			(S_{g_f},   g_s^f) & (S_g, g_s) \\
			M                  & M          \\
		};
		\path[pf]
		{
			(m-1-1) edge node{\(F\)} (m-1-2)
				edge (m-2-1)
			(m-2-1) edge node{\(f\)} (m-2-2)
			(m-1-2) edge (m-2-2)
		} ;
	\end{tikzpicture}
\end{equation}
An irreducible spin representation $\mu\colon\Spin(2)\to SO(V)$ will give rise to spinor bundles associated to the above principal $\Spin(2)$-bundles, and the isomorphism $\widetilde{Tf}$ induces an isometry $F$ of the corresponding spinor bundles, as shown in the commutative diagram above in the right.

In particular note that $F$ being an isometry means that for any $y\in M$ with $f(y)\equiv x\in M$, and for any $\sigma_1, \sigma_2\in {(S_g)}_x$,
\begin{equation}
	{g_s}_{|x}(\sigma_1,\sigma_2)={g_s^f}_{|y}\left(F_{|y}^{-1}\sigma_1,F_{|y}^{-1}\sigma_2\right).
\end{equation}
As a result, the Dirac operators $\pd_g$ on $S_g$ and $\pd_{g_f}$ on $S_{g_f}$ are the ``same'' up to the isometry $F$ in the sense that
\begin{equation}
	{(\pd_g)}_x=F_{|y} \circ {(\pd_{g_f})}_y \circ F^{-1}_{|y}.
\end{equation}
\begin{remark}
	One should note that this formula holds because $F$ is induced by an isometry $f\colon (M,g_f)\to (M,g)$.
	Then $Tf$ preserves the Levi-Civita connections on the tangent bundles, and hence $F$ preserves the spin connection.
	As a comparison, although the morphism~$\beta$ constructed in the previous subsection is also an isometry, it will not preserve the Dirac operator.
	Indeed, the map $b$ preserves the metrics but not the Lie brackets (this can be seen already in the case of a conformal perturbation of the Riemannian metric), hence $b$ does not preserve the Levi-Civita connection and consequently $\beta$ does not necessarily preserve the spin connections.
	This is the reason why a change of the Riemannian metric will give rise to a change of the Dirac operator, which we have used before.
\end{remark}

Now we explain the diffeomorphism invariance.
The claim is that
\begin{equation}\label{eq:diffeomorphism invariance of DA}
	DA(\sigma;g)=DA(F^{-1}\circ\sigma\circ f; g_f).
\end{equation}
Notice that $F^{-1}\circ \sigma \circ f$ is a section of the bundle $S_{g_f}\to (M, g_f)$, which is clear from the diagram~\eqref{diag:PullbackSpinorBundle}.
Then we have
\begin{equation}
	\begin{split}
		DA(F^{-1}\circ\sigma\circ f; g_f)
		&= \int_M {g_s^f}_{|y} \left(F^{-1}_{|y}{(\sigma\circ f)}_y, {(\pd_{g_f})}_y \left(F^{-1}_{|y}{(\sigma\circ f)}_y\right)\right) \dv_{g_f}(y) \\
		&= \int_M {g_s}_{|f(y)} \left(\sigma(f(y)), F_{|y}{(\pd_{g_f})}_y F^{-1}_{|y}(\sigma(f(y)))\right)\dv_{g_f}(y)\\
		&=\int_M {g_s}_{|x} \left(\sigma(x), {(\pd_g)}_x \sigma(x)\right) \dv_g(x) \\
		&= DA(\sigma;g).
	\end{split}
\end{equation}
Thus the claim~\eqref{eq:diffeomorphism invariance of DA} is confirmed.

To obtain the corresponding conservation law, we take a (local) one-parameter group of diffeomorphisms~$(f_t)$ of $M$ with $f_0=\id_M$ and
\begin{equation}
	\dt{0}f_t=X\in \Gamma(TM).
\end{equation}
For example, the flow generated by $X$ is such a family; the flow is global since $M$ is assumed to be compact.
Write $M_t=f_t^{-1}(M)=f_{-t}(M)$ and denote the pullback metrics on $M$ by $g_t\equiv g_{f_t}$.
The differential $Tf_t$ is again an isometry and hence can be viewed as a map of the principal $\SO(2)$-bundles.
Note that $M_t=M$ and hence $g$ is also a Riemannian metric on $TM_t$.
These two metrics can be related by an isometry $b_t\equiv b^g_{g_t}\colon (TM_t, g)\to (TM_t, g_t)$ as before and can be lift to the corresponding principal $\Spin(2)$-bundles.
In order to construct the lift, we notice that a diffeomorphism homotopic to the identity preserves the topological spin structure.
Therefore we have the following commutative diagram:
\begin{diag}
	\matrix[mat, column sep=small](m)
	{
		P_{\Spin}(M,g)    & P_{\Spin}(M_t,g_t) & P_{\Spin}(M,g) \\
		P_{\SO}(M,g)      & P_{\SO}(M_t, g_t)  & P_{\SO}(M,g)  \\
		M                 & M_t                & M             \\
		y                 & \id(y)=y           & x=f_t(y)      \\
	};
	\path[pf]
		(m-1-1) edge (m-2-1)
			edge node{\(\tilde{b}_t\)} (m-1-2)
		(m-1-2) edge (m-2-2)
			edge node{\(\widetilde{Tf}_t\)} (m-1-3)
		(m-1-3) edge (m-2-3)
		(m-2-1) edge (m-3-1)
			edge node{\(b_t\)} (m-2-2)
		(m-2-2) edge (m-3-2)
			edge node{\(Tf_t\)} (m-2-3)
		(m-2-3) edge (m-3-3)
		(m-3-1) edge node{\(\id\)} (m-3-2)
		(m-3-2) edge node{\(f_t\)} (m-3-3)
		(m-4-1) edge[commutative diagrams/mapsto] (m-4-2)
		(m-4-2) edge[commutative diagrams/mapsto] (m-4-3)
	;
\end{diag}
The bottom line exhibits the pointwise behavior of the maps on base manifolds.
Note that in this diagram all the horizontal maps are diffeomorphisms/isomorphisms.
The associated commutative diagram of spinor bundles is given by
\begin{diag}
	\matrix[mat, column sep=large](m)
	{
		(S_g, g_s) & (S_{g_t}, g_s(t)) & (S_g,g_s) \\
		M          & M_t               & M         \\
	};
	\path[pf]
		(m-1-1) edge node{\(\be_t\)} (m-1-2)
			edge (m-2-1)
		(m-2-1) edge node{\(\id\)}   (m-2-2)
		(m-1-2) edge node{\(F_t\)}   (m-1-3)
			edge (m-2-2)
		(m-2-2) edge node{\(f_t\)} (m-2-3)
		(m-1-3) edge (m-2-3)
	;
\end{diag}
The diffeomorphism invariance of the Dirac action says that for any $t$,
\begin{equation}
	DA(\sigma;g)=DA(F_t^{-1}\circ\sigma\circ f_t; g_t).
\end{equation}
Taking linearizations of both sides, we get
\begin{equation}
	\begin{split}
		0&=\dt{0}DA(F_t^{-1}\circ\sigma\circ f_t; g_{f_t}) \\
		&=\dt{0}\int_{M_t} {g_s(t)}_{|y} \left({F_t}_{|y}^{-1}{(\sigma\circ f_t)}_y, {(\pd_{g_t})}_y \left({F_t}_{|y}^{-1}{(\sigma\circ f_t)}_y\right)\right) \dv_{g_t}(y)\\
		&=\dt{0}\int_{M_t} {g_s(t)}_{|y} \left( \be_t\circ \be_t^{-1}\circ{F_t}_{|y}^{-1}{(\sigma\circ f_t)}_y, {(\pd_{g_t})}_y \left( \be_t\circ \be_t^{-1}\circ {F_t}_{|y}^{-1}{(\sigma\circ f_t)}_y\right)\right) \dv_{g_t}(y).
	\end{split}
\end{equation}
Write $\sigma_t\equiv \be_t^{-1}\circ {F_t}_{|y}^{-1}{(\sigma\circ f_t)}_y$.
Then using the spinor Lie-derivative as introduced in~\cite{bourguignon1992spineurs}, we have
\begin{equation}
	\dt{0}\sigma_t(y)=\dt{0}\be_t^{-1}\circ {F_t}_{|y}^{-1}{(\sigma\circ f_t)}_y=\Lie_X^S \sigma(y).
\end{equation}
Then, from the invariance~\eqref{eq:diffeomorphism invariance of DA} it follows that
\begin{equation}
	\begin{split}
		0&=\dt{0}\int_{M_t} {g_s(t)} \left(\be_t\sigma_t,\pd_{g_t}\be_t \sigma_t\right) \dv_{g_t} \\
		&=2\int_M \langle\Lie_X^S \sigma,\pd_g \sigma\rangle \dv_g
			-\frac{1}{2}\int_M \langle\Lie_X g, T\rangle\dv_g \\
		&=2\int_M \langle X, \diverg_\sigma(\pd_g\sigma)\rangle\dv_g
			+\int_M \langle X,\diverg_g(T)\rangle\dv_g.
	\end{split}
\end{equation}
Here the expression \(\diverg_\sigma\) is a formal analogue of the divergence for spinors, explained in Section~\ref{sect:formal divergence}.
As $X$ can be arbitrary, it follows that for any $\sigma$, the conservation law reads
\begin{equation}
	2\diverg_\sigma(\pd_g\sigma)+\diverg_g(T)=0
\end{equation}
In particular, along critical spinors which are the harmonic spinors, the energy-momentum tensor is divergence-free.
This confirms our expectation: the energy-momentum tensor is divergence-free on shell.

\subsection{Conclusions and remarks}
We summarize our discussion about the Dirac action in the following theorem.
\begin{thm}
	Let $S_g$ be a spinor bundle over $(M,g)$, and consider the following Dirac action defined on pure spinors $\sigma\in\Gamma(S_g)$ and Riemannian metrics $g$ by
	\begin{equation}
		DA(\sigma;g)=\int_M g_s(\sigma,\pd_g\sigma)\dv_g.
	\end{equation}
	\begin{enumerate}
		\item The total variation formula is
			\begin{equation}
				\delta DA=\int_M 2\langle \delta\sigma, EL(\sigma)\rangle - \frac12\langle\delta g, T(\sigma;g)\rangle\dv_g.
			\end{equation}
			where the Euler--Lagrange equation for the spinor is $EL(\sigma)=\pd_g\sigma=0$, and the energy-momentum tensor in a local oriented orthonormal frame $(e_\al)$ is
			\begin{equation}
				T(\sigma;g)=\left\{\frac{1}{2}\left\langle\sigma,\gamma(e_\al)\na^s_{e_\be}\sigma+\gamma(e_\be)\na^s_{e_\al}\sigma\right\rangle_{g_s}-\langle\sigma,\pd_g\sigma\rangle_{g_s} g_{\al\be}\right\}e^\al\otimes e^\be.
			\end{equation}
		\item This Dirac action is invariant under rescaled conformal transformations.
			That is, any $u\in C^\infty(M)$ induces a conformal metric $g'=e^{2u}g$ and there is a map $\beta\colon S_g\to S_{g'}$ such that $DA(e^{-\frac{1}{2}u}\beta\sigma;g')=DA(\sigma;g)$.
		\item The Dirac action is invariant under diffeomorphisms.
			That is, for any $f\in\Diff(M)$, there is an induced map $F\colon S_{g_f}\to S_g$  such that $DA(F^{-1}\circ \sigma\circ f;g_f)=DA(\sigma;g)$.
		\item The energy-momentum is a symmetric 2-tensor, with trace $\tr_g(T)=-\langle\sigma,\pd_g\sigma\rangle_{g_s}$.
			When~$\sigma$ is harmonic, $T$ is traceless and divergence-free, hence corresponds to a holomorphic quadratic differential on $M$.
	\end{enumerate}
\end{thm}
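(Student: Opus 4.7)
The plan is to assemble the proof from the four preceding subsections, which already contain the substantive computations; the central conceptual point is that the spinor bundle $S_g$ depends on the metric $g$, so every statement involving varying $g$ or pulling back by a diffeomorphism must be interpreted via the isometries $\beta$ and $F$ set up earlier following Bourguignon--Hijazi.

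For item~(1) I would start from a two-parameter family $(\sigma_t,g_t)$ with $\dt{0}\sigma_t=\delta\sigma$ and $\dt{0}g_t=\delta g$, and split the variation into a spinor piece (at fixed metric) and a metric piece (with the spinor transported by $\beta_t$). The spinor piece reduces to $2\int_M\langle\delta\sigma,\pd_g\sigma\rangle\dv_g$ by self-adjointness of $\pd_g$; the metric piece is exactly the calculation carried out in~\eqref{Dirac action-energy-momentum}, combining~\eqref{variation of frame}, the variation of the volume form, and the Bourguignon--Gauduchon--Maier formula for $\dt{0}\overline{\pd}_{g_t}$. Item~(2) is simply~\eqref{eq:DiracActionRescaledConformalInvariance}: the rescaling $e^{-u/2}$ is chosen so that, by Proposition~\ref{conformal transformation of dirac operator}, the powers of $e^u$ arising from $g_s'$, the rescaling, and $\dv_{g'}=e^{2u}\dv_g$ cancel exactly.

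Item~(3) I would deduce from Section~\ref{sect:diffeomorphism invariance of Dirac action}: since $f\colon(M,g_f)\to(M,g)$ is an isometry, the induced lift $F$ preserves the Levi-Civita and hence the spin connection, so it intertwines $\pd_{g_f}$ and $\pd_g$, and a change of variables $x=f(y)$ yields the invariance. For item~(4) I would first note that symmetry of $T$ is manifest from the explicit formula, then contract with $g^{\al\be}$ and use $\gamma(e_\al)\na^s_{e_\al}\sigma=\pd_g\sigma$ to read off the trace formula~\eqref{Dirac action-trace of energy-momentum}; on a harmonic spinor this vanishes, and the diffeomorphism conservation law $2\diverg_\sigma(\pd_g\sigma)+\diverg_g(T)=0$ derived just above the theorem then gives $\diverg_g T=0$ on shell. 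The final identification with a holomorphic quadratic differential is the standard fact that on a Riemann surface the real, symmetric, traceless, divergence-free $2$-tensors are precisely the real parts of holomorphic quadratic differentials.

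The only genuine difficulty in the whole argument is conceptual rather than computational: the spinor bundle $S_g$ changes with $g$, so ``varying $g$ while keeping $\sigma$ fixed'' is not literally meaningful, and one must work consistently with the isometries $\beta$, $F$ and their $t$-derivatives throughout. Once that framework is in place, as it is in the preceding subsections, the four assertions of the theorem are essentially immediate.
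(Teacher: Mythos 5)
Your proposal is correct and takes essentially the same approach as the paper: the theorem there is explicitly a summary statement, and its proof is precisely the assembly of the preceding subsections that you describe --- the variation formula from~\eqref{Dirac action-energy-momentum}, rescaled conformal invariance from~\eqref{eq:DiracActionRescaledConformalInvariance}, diffeomorphism invariance via the connection-preserving isometry $F$, and the trace identity~\eqref{Dirac action-trace of energy-momentum} combined with the on-shell conservation law $2\diverg_\sigma(\pd_g\sigma)+\diverg_g(T)=0$. The only slip is attributional: the metric-variation framework is due to Bourguignon--Gauduchon (as you correctly cite later), not Bourguignon--Hijazi.
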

The methods we presented here do generalize to higher dimensions and all results except the rescaled conformal invariance do hold there.
It is also straightforward to adapt the theory presented here to models with mass-term or other terms which do not involve derivatives of spinor fields.

\section{The case of the nonlinear sigma model with gravitinos}%
\label{sect:pseudo supersymmetric action}
We now turn to the full nonlinear sigma model with gravitino as considered in~\cite{jost2016regularity}.
Let us first briefly recall its construction.
As before $\map\colon (M,g)\to(N,h)$ is a map from a Riemann surface to a Riemannian manifold, and a fixed spin structure $P_{\Spin}(M,g)\xrightarrow{\xi} P_{\SO}(M,g)$ has been chosen.
$(S_g, g_s)$ is a rank-four real a spinor bundle associated to this spin structure.

On the twisted vector bundle $S_g\otimes \map^*TN$ there is the induced metric $g_s\otimes\map^*h$ and induced metric connection, denoted by $\widetilde{\na}$.
A section \(\psi\) of this bundle, called a ``vector spinor'', will serve as a super partner of the map $\map$.
Note that a spin Dirac operator $\D_g$ can be defined in the same manner as above, i.e.,
\begin{equation}
	\D_g\psi=\gamma(e_\al)\widetilde{\na}_{e_\al}\psi.
\end{equation}
Let $\{y^i\}$ be a local coordinate of $N$, then $\psi$ can be locally written as $\psi=\psi^j\otimes\map^*\left(\frac{\p}{\p y^j}\right)$, where each $\psi^j$ is a (local) pure spinor.
The Dirac operator then acts as
\begin{equation}\label{eq:dfn:Dirac operator}
	\begin{split}
		\D_g\psi
		&=\gamma(e_\al)\left(\na^s_{e_\al}\psi^j\otimes \map^*\left(\frac{\p}{\p y^j}\right) +\psi^j\otimes\na^{\map^*TN}_{e_\al} \frac{\p}{\p y^j}\right) \\
		&=\pd_g\psi^j\otimes\map^*\left(\frac{\p}{\p y^j}\right)+\gamma(e_\al)\psi^j\otimes \map^*\left(\na^{TN}_{T\map(e_\al)} \frac{\p}{\p y^j}\right).
	\end{split}
\end{equation}

A super partner of the Riemannian metric $g$ is given by a gravitino $\chi$, which is a section of the bundle~$S_g\otimes TM$.
This bundle splits into two orthogonal summands, with the projections given by
\begin{align}
	P\chi=-\frac{1}{2}\gamma(e_\be)\gamma(e_\al)\chi^\al\otimes e_\be, & &
	Q\chi=-\frac{1}{2}\gamma(e_\al)\gamma(e_\be)\chi^\al\otimes e_\be.
\end{align}
For further geometric properties of gravitinos in super geometry we refer to~\cite{jost2014super}.

Then the action functional is given by
\begin{equation}
\label{eq:action functional}
	\begin{split}
		\A(\map, \psi;g, \chi)\coloneqq \int_M & |\dd \map|_{\gco\otimes \map^*h}^2
			+ \langle \psi, \D \psi \rangle_{g_s\otimes \map^*h}  \\
		&  -4\langle (\mathds{1}\otimes\map_*)(Q\chi), \psi \rangle_{g_s\otimes\map^*h}
			-|Q\chi|^2_{g_s\otimes g} |\psi|^2_{g_s\otimes \map^*h}
			-\frac{1}{6} \Rm^{N}(\psi) \dd vol_g,
	\end{split}
\end{equation}
where $\Rm^N$ stands for the pullback of the curvature of $N$ under $\map$, and the curvature term in the action is defined by
\begin{equation}
	-\frac{1}{6}\Rm^N(\psi)=-\frac{1}{6}\Rm^{N}_{ijkl}(\phi)\left\langle \psi^i,\psi^k\right\rangle_{g_s} \left\langle\psi^j,\psi^l\right\rangle_{g_s}.
\end{equation}

The Euler--Lagrange equation of \(\A\) are computed in~\cite{jost2016regularity}:
\begin{equation}%
\label{EL-eq}
	\begin{split}
		0 = EL(\map) &= \tau(\phi) - \frac{1}{2}\Rm^{\phi^*TN}(\psi, e_\al\cdot\psi)\phi_* e_\al + \frac{1}{12}S\na R(\psi)   \\
			&\quad + \langle \na^S_{e_\be}(e_\al \cdot e_\be \cdot \chi^\al), \psi \rangle_S
			+ \langle e_\al \cdot e_\be \cdot \chi^\al, \na^{S\otimes\phi^*TN}_{e_\be} \psi \rangle_S,  \\
		0 = EL(\psi) &= \D\psi - |Q\chi|^2\psi - \frac{1}{3}SR(\psi) - 2(\mathds{1}\otimes \phi_*)Q\chi
	\end{split}
\end{equation}
Here \(\tau(\phi)\) is the tension field of the map \(\phi\) and \(SR(\psi)\) is a term involving the curvature of the target and is of third order in \(\psi\).
The term \(S\nabla R(\psi)\) involves derivatives of \(\Rm^N\) and is of fourth order in \(\psi\).
For the precise form of the curvature terms we refer to~\cite{jost2016regularity}.

From the case of the Dirac action treated before and the analogous two-dimensional nonlinear super symmetric sigma model considered in super string theory (see for example~\cite{brink1976locally, deser1976complete}) that the model is invariant under (rescaled) conformal transformations, super Weyl transformations and diffeomorphisms.
We will see that this expectation is met.
The corresponding conservations laws will be formulated with the help of the energy-momentum tensor and the supercurrent which is given by the variation of the action with respect to the gravitino.
The rescaled conformal and super Weyl symmetry lead to algebraic properties of energy-momentum tensor and supercurrent respectively.
The diffeomorphism invariance leads to a coupled differential equation for the energy-momentum tensor and the supercurrent which holds on shell.
At first sight it might be surprising that the conservation law for the diffeomorphism invariance couples the energy-momentum tensor and the supercurrent.
Notice, however, that by “on shell” we only assume that the equations of motion for the map \(\map\) and the vector spinor \(\psi\) hold.

An important feature of the model considered in physics is super symmetry.
As the model introduced in~\cite{jost2016regularity} was built only with commuting variables, full supersymmetry cannot be expected.
However, we show that in special cases there is a remainder of super symmetry, which we call “degenerate supersymmetry”.
The conservation law associated to degenerate super symmetry leads to a differential equation for the supercurrent.
Together with the conservation law from diffeomorphism invariance we can conclude in this special case that energy-momentum tensor and supercurrent are holomorphic quantities.

One sees immediately that the Dirac-harmonic functional introduced in~\cite{chen2005regularity, chen2006dirac} and some of its variants~\cite{chen2008liouville,chen2008nonlinear} are special cases of this action functional.
Hence the computations presented here contain the several variants of the Dirac-harmonic action functional.
This leads to an interesting application of degenerate super symmetry to Dirac-harmonic maps, showing that two known simple solutions are actually related by degenerate super symmetry.

\subsection{Rescaled conformal symmetry and super Weyl symmetry}
As explained before the spin structure, spinor bundle and Dirac operator depend on the chosen metric \(g\).
If we now choose a metric \(g'=e^{2u}g\) that is conformal to \(g\) we also need the isometric bundle isomorphisms \(b\colon (TM, g)\to (TM, g')\) and \(\beta\colon (S_g, g_s)\to (S_{g'}, g_s')\) defined in Section~\ref{sect:ConformalInvarianceDiracAction}.
Furthermore, as in the case of the Dirac action an additional rescaling of the spinors is needed.
Hence we obtain the following rescaled conformal invariance:
\begin{equation}%
\label{eq:super action:rescaled conformal invariance}
	\A\left(\map,e^{-\frac12 u}(\beta\otimes \mathds{1})\psi;e^{2u}g,e^{-\frac12 u}(\beta\otimes b)\chi\right)=\A\left(\map,\psi;g,\chi\right).
\end{equation}
Indeed, under the conformal transformations, the volume form rescales as $\dv_{g'}=e^{2u}\dv_g$.
The harmonic term behaves as in the classical theory and the Dirac term is very similar to the case considered in Section~\ref{sect:ConformalInvarianceDiracAction}.
The three remaining terms can be checked easily using the fact that \(b\) and \(\beta\) are isometries.
For the third term it holds:
\begin{equation}
	\begin{split}
		-4\langle(\mathds{1}\otimes\map_*)&Q' e^{-\frac{1}{2}u}(\beta\otimes b)\chi, e^{-\frac{1}{2}u}(\beta\otimes\mathds{1})\psi\rangle_{g'_s\otimes\map^*h}\\
		&=2e^{-u}\left\langle\gamma'(b(e_\beta))\gamma'(b(e_\alpha))\beta(\chi^\beta)\otimes b(e_\alpha), \beta(\psi^j)\otimes \map^*\left(\frac{\p}{\p y^j}\right)\right\rangle_{g'_s\otimes \map^*h} \\
		&=2e^{-u}\left\langle\beta(\gamma(e_\be)\gamma(e_\al)\chi^\be)\otimes (e^{-u}e_\al), \beta(\psi^j)\otimes \map^*\left(\frac{\p}{\p y^j}\right)\right\rangle_{g'_s\otimes \map^*h} \\
		&=2e^{-2u}\left\langle \gamma(e_\be)\gamma(e_\al)\chi^\be\otimes e_\al, \psi\right\rangle_{g_s\otimes\map^*h} \\
		&=-4e^{-2u}\langle(\mathds{1}\otimes\map_*)Q\chi, \psi\rangle_{g_s\otimes\map^*h}.
	\end{split}
\end{equation}
For the fourth term notice
\begin{equation}
		|Q'(\beta\otimes b)e^{-\frac12 u}\chi |^2_{g_s'\otimes g'}
		= e^{- u} |(\beta\otimes b)Q\chi|^2_{g_s'\otimes g'}
		= e^{-u} |Q\chi|^2_{g_s\otimes g},
\end{equation}
and
\begin{equation}
	|(\beta\otimes\mathds{1})e^{-\frac12 u}\psi|^2_{g_s'\otimes\map^*h}
	= e^{-u} g_s'(\beta\psi^i,\beta\psi^j)h_{ij}(\map)
	= e^{-u} g_s(\psi^i,\psi^j)h_{ij}(\map)
	= e^{-u} |\psi|^2_{g_s\otimes\map^*h},
\end{equation}
which implies
\begin{equation}%
\label{eq:ConformalInvarianceIV}
	|Q'(\beta\otimes b)e^{-\frac12 u}\chi |^2_{g_s'\otimes g'}|(\beta\otimes\mathds{1})e^{-\frac12 u}\psi|^2_{g_s'\otimes\map^*h}
	= e^{-2u} |Q\chi|^2_{g_s\otimes g} |\psi|^2_{g_s\otimes\map^*h}.
\end{equation}
The curvature term again uses that \(\beta\) is an isometry:
\begin{equation}%
\label{eq:ConformalInvarianceV}
	\begin{split}
		\Rm((\beta\otimes\mathds{1})e^{-\frac12 u}\psi)
		&= e^{-2u} \Rm^{N}_{ijkl}g_s'\left(\beta\psi^i,\beta\psi^k\right)g_s'\left(\beta\psi^j,\beta\psi^l\right) \\
		&= e^{-2u} \Rm^{N}_{ijkl}g_s\left(\psi^i,\psi^k\right) g_s\left(\psi^j,\psi^l\right)
		= e^{-2u} \Rm(\psi).
	\end{split}
\end{equation}
This completes the verification of the rescaled conformal invariance.

In contrast to the rescaled conformal invariance, super Weyl invariance affects only the gravitino.
As only \(Q\chi\) enters the action functional, we have
\begin{equation}%
\label{eq:super action:SuperWeylInvariance}
	\A\left(\map, \psi; g, \chi + \zeta\right)=\A\left(\map,\psi;g,\chi\right).
\end{equation}
for \(Q\zeta=0\).
The property~\eqref{eq:super action:SuperWeylInvariance} is called super Weyl invariance.

We now take the opportunity to recall some properties of the bundle \(S_g\otimes TM\) and of the projections \(P\) and \(Q\), which have been presented in detail in~\cite{jost2016regularity}.
The Riemann surface \(M\) possesses an almost complex structure \(\ACM\) such that \(g(\ACM X, Y)=\dd{vol}_g(X, Y)\) for all vector fields \(X\) and \(Y\).
Left multiplication of spinors by the negative volume form \(-\omega=-e_1\cdot e_2\in \CliffordBundleM\) yields an almost complex structure on \(S_g\) that is compatible with \(\ACM\), that is \(\gamma(\ACM X) s = -\gamma(X) \omega s\) for all spinors \(s\).
The decomposition of \(S_g\) in even and odd part yields \(S_g=\SpinorBundleP\oplus\SpinorBundleP\), where both summands are isomorphic as associated vector bundles to \(\SpinPFB\).
The almost complex structure \(\omega\) restricts to \(\SpinorBundleP\) and the restriction will be denoted by \(\ACSBP\).
For the complex line bundle \(W=(\SpinorBundleP, \ACSBP)\) it holds that \(W\otimes_\C W= T^*M\).
With this preparation we can now identify the summands in \(S_g\otimes TM = \ker Q\oplus \Ima Q\) to be
\begin{align}
	\ker Q &= S_g = W\oplus W &
	\Ima Q &= S_g\otimes_\C TM = \left(W\otimes_\C W^\vee\otimes_\C W^\vee\right)\oplus\left(W\otimes_\C W^\vee\otimes_\C W^\vee\right)
\end{align}
Hence both \(\ker Q\) and \(\Ima Q\) are naturally holomorphic vector bundles.

\subsection{Supercurrent}
The supercurrent is the variation of the action with respect to the gravitino.
As the gravitino enters the action only algebraically, computation of the supercurrent is straightforward.
Fix $(\map,\psi)$ as well as the metric~$g$ and vary the gravitino via~$X(t)=X^\al(t)\otimes e_\al\in \Gamma(S_g\otimes TM)$ with $X(0)=\chi$ and
\begin{equation}
	\dt{0} X^\al(t)=\zeta^\al.
\end{equation}
Then
\begin{equation}
	\begin{split}
		\dt{0} \A(\map,\psi;& g,X(t))
		=\dt{0} \int_M -4\langle (\mathds{1}\otimes\map_*)(QX(t)), \psi \rangle_{g_s\otimes\map^*h}-|QX(t)|^2_{g_s\otimes g} |\psi|^2_{g_s\otimes \map^*h} \dv_g.
	\end{split}
\end{equation}
This can be computed as follows.
\begin{gather}
	\begin{split}
		\dt{0} \int_M &2\langle \gamma(e_\al) \gamma(e_\be) X^\al\otimes \map_* e_\be, \psi \rangle_{g_s\otimes\map^*h} \dv_g \\
		&= \int_M 2\left\langle \gamma(e_\al) \gamma(e_\be) \left(\frac{\dd}{\dd t} X^\al\right) \otimes \map_* e_\be, \psi \right\rangle_{g_s\otimes\map^*h} \dv_g \Big|_{t=0} \\
		&= \int_M 2\left\langle \left(\frac{\dd}{\dd t} X^\al\right), \langle \map_* e_\be, \gamma(e_\be) \gamma(e_\al) \psi \rangle_{\map^*h} \right\rangle_{g_s} \dv_g \Big|_{t=0} \\
		&= \int_M 2\left\langle \zeta^\al, \langle \map_* e_\be, \gamma(e_\be) \gamma(e_\al) \psi \rangle_{\map^*h} \right\rangle_{g_s}\dv_g,
	\end{split} \displaybreak[0]\\
	\begin{split}
		\dt{0} \int_M &\frac{1}{2}\langle X^\be,\gamma(e_\al) \gamma(e_\be) X^\al  \rangle_{g_s} |\psi|^2_{g_s\otimes\map^*h} \dv_g  \\
		&= \int_M \left\langle  \left(\frac{\dd}{\dd t} X^\al\right) ,\gamma(e_\al) \gamma(e_\be) X^\al \right\rangle_{g_s} |\psi|^2_{g_s\otimes\map^*h}\dv_g \Big|_{t=0}  \\
		&= \int_M \langle \zeta^\be,\gamma(e_\al) \gamma(e_\be) \chi^\al \rangle_{\gs} |\psi|^2_{g_s\otimes\map^*h} \dv_g.
	\end{split}
\end{gather}
Hence setting the \emph{supercurrent} to be $J=J^\al\otimes e_\al\in\Gamma(S_g\otimes TM)$ with
\begin{equation}\label{eq:supercurrent}
	J^\al = 2\langle \map_* e_\be, \gamma(e_\be) \gamma(e_\al) \psi \rangle_{\map^*h} +|\psi|^2 \gamma(e_\be) \gamma(e_\al) \chi^\be,
\end{equation}
we obtain
\begin{equation}
	\dt{0} \A(\map,\psi;g,X(t))=\int_M \langle\zeta, J\rangle_{g_s\otimes g} \dv_g.
\end{equation}

The conservation law associated to the super Weyl symmetry is obtained as follows.
For any $\zeta\in\Gamma(S_g\otimes TM)$, we have \(QP\zeta=0\) and hence
\begin{equation}
	0 = \dt{0} \A(\map,\psi;g,\chi+tP\zeta) = \int_M \langle P\zeta, J\rangle =\int_M \langle \zeta, PJ\rangle_{g_s\otimes g} \dv_g
\end{equation}
Since $\zeta$ can be arbitrary, we conclude that $J$ satisfies
\begin{equation}
	PJ=0,
\end{equation}
and hence \(J\in\Gamma(\Ima Q)\).

\begin{remark}
	Note that for a section $\zeta=\zeta^\al\otimes e_\al\in\Gamma(S_g\otimes TM)$, the followings are equivalent:
	\begin{align}
		P\zeta=0& \Leftrightarrow &
		\gamma(e_\al)\zeta^\al=0 &\Leftrightarrow&
		\zeta^1=\gamma(e_1)\gamma(e_2)\zeta^2 &\Leftrightarrow&
		\zeta^2=-\gamma(e_1)\gamma(e_2)\zeta^1,
	\end{align}
	which are all equivalent to say that $\zeta=\zeta^\al\otimes e_\al$ lies in $\Ima Q=(\SpinorBundleM_g,\ACSBP\oplus\ACSBP) \otimes_{\C} TM$, where $\ACSBP$ is the complex structure on $\SpinorBundleP$.
	Note that the bundle \(\Ima Q\) is a  holomorphic vector bundle over the Riemann surface $M$.
	Later we will show that in some particular cases $J$ is actually an holomorphic section.
\end{remark}

\subsection{Energy-momentum tensor}
Let ${(g_t)}_t$ be a family of Riemannian metrics on $M$ with
\begin{equation}
	\dt{0}g_t=k\in \Gamma(\Sym(T^*M\otimes T^*M)),
\end{equation}
and let $K\in \End(TM)$ be the associated endomorphism.
As in the discussions of Dirac actions, the spinor bundles change theoretically with the metrics, so the vector spinors and gravitinos need to be carried using isometries along with the variation of metrics.
Thus, one needs to calculate the linearization
\begin{equation}
	\dt{0} \A\left(\map,(\be_t\otimes \mathds{1})\psi; g_t,(\be_t\otimes b_t)\chi\right)=\dt{0}\int_M\mathrm{I+II+III+IV+V}\dd{vol}_{g_t},
\end{equation}
where the roman numerals $\mathrm{I},\dots,\mathrm{V}$ denote the summands under integral of the action functional.
We calculate them as follows.
\begin{enumerate}[label=(\roman*)]
	\item\label{item:EMTHarmonic}
		The energy of the map can be analyzed as usual:
		\begin{equation}
			\dt{0}\mathrm{I}
			=\dt{0} E_{\al}(\map^i) E_\al(\map^j) h_{ij}(\map) \\
			=-\frac{1}{2} K^\be_\al \cdot\left(2\langle \map_*e_\al,\map_* e_\be\rangle_{\map^*h}\right).
		\end{equation}
	\item
		Locally write the vector spinor as $\psi=\psi^j\otimes \map^*\left(\frac{\p}{\p y^j}\right)$, then
		\begin{equation}
			\left(\be_t\otimes \mathds{1}\right)\psi=\be_t(\psi^j)\otimes \map^*\left(\frac{\p}{\p y^j}\right)
		\end{equation}
		is a section of $S_{g_t}\otimes \map^*TN$.
		Recall the definition of the twisted Dirac operator~\eqref{eq:dfn:Dirac operator},
		\begin{equation}
			\begin{split}
				\D_{g_t}(\be_t\otimes\mathds{1})\psi
				=\pd_{g_t}(\be_t\psi^j)\otimes\map^*\left(\frac{\p}{\p y^j}\right)
				+\gamma_t\left(E_\al(t)\right)\be_t\psi^j\otimes \map^*\left(\na^{TN}_{T\map(E_\al)}\frac{\p}{\p y^j}\right)
			\end{split}
		\end{equation}
		and consequently
		\begin{equation}
			\begin{split}
				\big\langle(\be_t\otimes \mathds{1})&\psi,
				\D_{g_t}(\be_t\otimes\mathds{1})\psi\big\rangle_{g_s(t)\otimes\map^*h}  \\
				=&g_s(t)\left(\be_t\psi^i,\pd_{g_t}(\be_t\psi^j)\right)h_{ij}(\map) \\
				&\qquad	+ g_s(t)\left(\be_t\psi^i,\gamma_t(E_\al)\be_t\psi^j\right)h\left(\frac{\p}{\p y^i}, \na^{TN}_{T\map(E_\al)}\frac{\p}{\p y^j}\right)\\
				=&g_s(\psi^i,\be_t^{-1}\pd_{g_t}(\be_t\psi^j)) h_{ij}(\map)
					+g_s(\psi^i,\gamma(e_\al)\psi^j)h\left(\frac{\p}{\p y^i}, \na^{TN}_{T\map(E_\al)}\frac{\p}{\p y^j}\right) \\
				=&g_s(\psi^i,\ov{\pd}_{g_t}\psi^j) h_{ij}(\map)
					+g_s(\psi^i,\gamma(e_\al)\psi^j)h\left(\frac{\p}{\p y^i}, \na^{TN}_{T\map(E_\al)}\frac{\p}{\p y^j}\right).
			\end{split}
		\end{equation}
		Taking derivative with respect to $t$  gives
		\begin{equation}
			\begin{split}
				\dt{0} \mathrm{II}
				&= \dt{0}\big\langle(\be_t\otimes\mathds{1})\psi, \D_{g_t}(\be_t\otimes\mathds{1})\psi\big\rangle_{g_s(t)\otimes\map^*h}\\
				&= g_s\left(\psi^i,\dt{0}\ov{\pd}_{g_t}\psi^j\right) h_{ij}(\map)\\
				&\qquad\qquad+g_s(\psi^i,\gamma(e_\al)\psi^j)h\left(\frac{\p}{\p y^i}, \dt{0}\na^{TN}_{T\map(E_\al)}\frac{\p}{\p y^j}\right) \\
				&=g_s\left(\psi^i,-\frac{1}{2}\gamma(e_\al)\na^s_{K(e_\al)}\psi^j\right) h_{ij}(\map) \\
					&\qquad\qquad  +g_s(\psi^i,\gamma(e_\al)\psi^j)h\left(\frac{\p}{\p y^i}, \na^{TN}_{T\map(-\frac{1}{2}K (e_\al))}\frac{\p}{\p y^j}\right)  \\
				&=-\frac{1}{2}K^\be_\al\left(g_s(\psi^i,\na^s_{e_\be}\psi^j)h_{ij}(\map)
					+g_s(\psi^i,\gamma(e_\al)\psi^j)h\left(\frac{\p}{\p y^i},\na^{TN}_{T\map(e_\be)}\frac{\p}{\p y^j}\right)\right)\\
				&=-\frac{1}{2}K^\be_\al \left\langle\psi,\gamma(e_\al)\widetilde{\na}_{e_\be}\psi\right\rangle_{g_s\otimes\map^*h} \\
				&=-\frac{1}{2}K^\be_\al \left\langle \psi, \frac{1}{2}\big(\gamma(e_\al)\widetilde{\na}_{e_\be}\psi+\gamma(e_\be)\widetilde{\na}_{e_\al}\psi\big)\right\rangle_{g_s\otimes\map^*h}.
			\end{split}
		\end{equation}
		where the second equality follows from the fact that for any vector field $X\in \Gamma(TM)$,
		\begin{equation}
			g_s(\psi^i,\gamma(X)\psi^j)h_{ij}(\map)=0
		\end{equation}
		due to the skew-adjointness of the Clifford multiplications, and the last equality holds since $K$ is symmetric.
	\item
		Next we consider the third summand of the action functional, which mixes all the fields together.
		Actually, since
		\begin{equation}
			\begin{split}
				-4\langle(\mathds{1}\otimes\map_*)&Q_t(\be_t\otimes b_t)\chi,
				(\be_t\otimes\mathds{1})\psi\rangle_{g_s(t)\otimes\map^*h} \\
				&=2\left\langle\gamma_t(E_\al)\gamma_t(E_\be)\be_t\chi^\al\otimes\map_*(E_\be), \be_t\psi^j\otimes\map^*\left(\frac{\p}{\p y^j}\right)\right\rangle_{g_s(t)\otimes\map^*h}\\
				&=2g_s(t)\left(\be_t(\gamma(e_\al)\gamma(e_\be)\chi^\al), \be_t\psi^j\right) h(\map)\left(T\map(E_\be),\frac{\p}{\p y^j}\right) \\
				&=2g_s\big(\gamma(e_\al)\gamma(e_\be)\chi^\al, \psi^j\big) h(\map)\left(T\map(b_t e_\be), \frac{\p}{\p y^j}\right),
			\end{split}
		\end{equation}
		we have
		\begin{equation}
			\begin{split}
				\dt{0}\mathrm{III}
				&= -4\dt{0} \langle(\mathds{1}\otimes\map_*)Q_t(\be_t\otimes b_t)\chi, (\be_t\otimes\mathds{1})\psi\rangle_{g_s(t)\otimes\map^*h}  \\
				&= 2g_s\big(\gamma(e_\al)\gamma(e_\be)\chi^\al,\psi^j\big) h(\map)\left(\dt{0}T\map(b_t e_\be), \frac{\p}{\p y^j}\right) \\
				&=-\frac{1}{2} K^\be_\al\cdot \langle\gamma(e_\eta)\gamma(e_\al)\chi^\eta\otimes \map_*e_\be
					+\gamma(e_\eta)\gamma(e_\be)\chi^\eta\otimes\map_* e_\al, \psi\rangle_{g_s\otimes\map^*h} \\
			\end{split}
		\end{equation}
	\item
		For the fourth term we conclude as in~\eqref{eq:ConformalInvarianceIV} that
		\begin{equation}
			\begin{split}
				\dt{0}\mathrm{IV}
				&= -\dt{0} |Q_t(\be_t\otimes b_t)\chi|^2_{g_s(t)\otimes g_t} |(\be_t\otimes \mathds{1})\psi|^2_{g_s(t)\otimes\map^*h} \\
				&= \dt{0} |Q\chi|^2_{g_s\otimes g} |\psi|^2_{g_s\otimes\map^*h}
				=0
			\end{split}
		\end{equation}
	\item
		The curvature term is analogous to~\eqref{eq:ConformalInvarianceV}:
			\begin{equation}
				\dt{0}\mathrm{V}
				= -\frac16 \dt{0} \Rm((\be_t\otimes\mathds{1})\psi)
				= -\frac16 \dt{0} \Rm(\psi)
				= 0
			\end{equation}
	\item\label{item:EMTVolumeForm}
		We still need to consider the change in the volume form.
		As in Section~\ref{sect:ConformalInvarianceDiracAction}, we have that
		\begin{equation}
			\dt{0} \dd{vol}_{g_t} = \frac12 \tr_g(K)\dd{vol}_g =\frac12 K_\alpha^\beta\delta_\beta^\alpha \dd{vol}_g
		\end{equation}
		Consequently we have
		\begin{equation}
			\int_M \left(\mathrm{I+II+III+IV+V}\right) \frac{\dd}{\dd t}\dd{vol}_{g_t}\Big|_{t=0} = \frac12\int_M \left(\mathrm{I+II+III+IV+V}\right)\Big|_{t=0} K_\alpha^\beta \delta_\beta^\alpha \dd{vol}_g.
		\end{equation}
\end{enumerate}
Summing the contributions from~\ref{item:EMTHarmonic} to~\ref{item:EMTVolumeForm} up we obtain
\begin{equation}
	\begin{split}
		\dt{0} \A\left(\map,(\be_t\otimes \mathds{1})\psi; g_t,(\be_t\otimes b_t)\chi\right)
		=-\frac{1}{2}\int_M K^\be_\al T^\al_\be \dv_g
		=-\frac{1}{2}\int_M \left\langle \frac{\p g_t}{\p t}\Big|_{t=0}, T\right\rangle \dv_g.
	\end{split}
\end{equation}
Here the inner product under integral is the induced one on symmetric covariant 2-tensors, $T^\al_\be= g^{\al\eta}T_{\eta\be}$ and \(T=T_{\alpha\beta}e^\alpha\otimes e^\beta\), where
\begin{equation}%
\label{eq:super action:energy-momentum}
	\begin{split}
		T_{\al\be}
		&=2\langle \map_*e_\al,\map_* e_\be\rangle_{\map^*h} + \frac{1}{2}\left\langle\psi,\gamma(e_\al)\widetilde{\na}_{e_\be}\psi+\gamma(e_\be)\widetilde{\na}_{e_\al}\psi\right\rangle_{g_s\otimes\map^*h} \\
		&\quad +\langle\gamma(e_\eta)\gamma(e_\al)\chi^\eta\otimes \map_*e_\be+\gamma(e_\eta)\gamma(e_\be)\chi^\eta\otimes\map_*e_\al, \psi\rangle_{g_s\otimes\map^*h}\\
		&\quad - \left(|\dd\map|^2_{\gco\otimes \map^*h} + \langle\psi,\D_g\psi\rangle - 4\langle(\mathds{1}\otimes\map_*)Q\chi, \psi\rangle - |Q\chi|^2 |\psi|^2 -\frac{1}{6}\Rm(\psi)\right)g_{\al\be}.
	\end{split}
\end{equation}
Being a quantity rising from the variation of a symmetric 2-tensor, the energy-momentum tensor is naturally symmetric, as the expression clearly shows.

One can verify that the energy-momentum tensor is in general not traceless.
This is due to the fact that the action functional is not invariant under conformal transformations on $g$, that is, for $g'=e^{2u}g$ in general
\begin{equation}
	\A\left(\map,(\beta\otimes \mathds{1})\psi; g',(\beta\otimes b)\chi\right)\neq \A\left(\map,\psi;g,\chi\right).
\end{equation}
Instead, the action functional is invariant under the rescaled conformal invariance, see~\eqref{eq:super action:rescaled conformal invariance}.
As in the case of the Dirac action the conservation law corresponding to the rescaled conformal invariance prescribes the trace of the energy-momentum tensor:
\begin{equation}
	\begin{split}
		0 &= \dt{0} \A\left(\map, e^{-\frac12 tu}(\beta_t\otimes \mathds{1})\psi; e^{2tu}g, e^{-\frac12 tu}(\beta_t\otimes b_t)\chi\right) \\
		&= \int_M 2\left< -\frac12 u \psi, EL(\psi)\right> - \frac12\left<2u g, T\right> + \left<-\frac12 u \chi, J\right> \dd{vol}_g \\
		&= -\int_M u\left(\tr_g(T) + \left<\psi, EL(\psi)\right> + \frac12\left<\chi, J\right>\right) \dd{vol}_g
	\end{split}
\end{equation}
As the integral has to vanish for all functions \(u\), we conclude
\begin{equation}
	\tr_g(T) = - \left<\psi, EL(\psi)\right> - \frac12\left<\chi, J\right>,
\end{equation}
where \(EL(\psi)\) denotes the Euler--Lagrange equation for \(\psi\) computed in~\cite{jost2016regularity}.
Notice, if the Euler--Lagrange equation for \(\psi\) is satisfied and either \(\chi\) or \(J\) vanish, then \(T\) is actually traceless.
In that case \(T\) can be identified with a smooth section of \(T^*M\otimes_\C T^*M\), that is a quadratic differential.
We will later show that under certain conditions \(T\) is actually a holomorphic quadratic differential.

\subsection{Diffeomorphism invariance}
It will be shown that the super action functional is invariant under diffeomorphisms if the fields are “pulled back” in an appropriate way.
Let $f\in \Diff(M)$ and consider the following \emph{diffeomorphism transformations}:
\begin{equation}\label{eq:super action:diffeomorphism transformation}
	\begin{split}
		\map &\mapsto \map'\coloneqq\map\circ f, \\
		\psi=\psi^j\otimes\map^*\left(\frac{\p}{\p y^j}\right) &\mapsto \psi'\coloneqq F^{-1}\circ \psi^j\circ f\otimes {(\map\circ f)}^*\left(\frac{\p}{\p y^j}\right), \\
		g& \mapsto g'\coloneqq g_f, \\
		\chi=\chi^\al\otimes e_\al &\mapsto \chi'\coloneqq F^{-1}\circ \chi^\al\circ f\otimes {(Tf)}^{-1}e_\al,
	\end{split}
\end{equation}
where $F\colon (S_{g_f}, \gs^f)\to (S_g,\gs)$ is the isomorphism introduced in Section~\ref{sect:diffeomorphism invariance of Dirac action}.
Then we claim that
\begin{equation}
	\A(\map',\psi';g',\chi')=\A(\map,\psi;g,\chi).
\end{equation}
To see this, suppose that under the diffeomorphism $f$, $y\mapsto x=f(y)$. Then as in the harmonic map case
\begin{equation}
	|\dd\map'|^2_{g'^\vee\otimes \map'^*h}(y)=|\dd\map|^2_{\gco\otimes\map^*h}(x).
\end{equation}
For those terms involving spinors, we note that for any spinor $\sigma\in\Gamma(S)$,
\begin{equation}\label{eq:equivariance of F_t}
	F^{-1}{\left(\gamma(e_\al)\sigma\right)}_{f(y)}=\gamma'\big({(Tf)}_y^{-1}e_\al\big)F^{-1}{(\sigma)}_{f(y)}
\end{equation}
where $\gamma'$ denotes the Clifford multiplications with respect to the metric $g'=g_f$.
From this we will see that the other terms are also invariant:
\begin{enumerate}[label=(\roman*)]
	\item
		First consider the Dirac term
		\begin{equation}
			\begin{split}
				\D_{g'}\psi'(y)
				&= {(\pd_{g'})}_y {F}_{|y}^{-1}{(\psi^k\circ f)}_y\otimes\map'^*\left(\frac{\p}{\p y^k}\right) \\
				&\qquad+\gamma'(Tf^{-1}(e_\al)){F}_{|y}^{-1}{(\psi^k\circ f)}_y\otimes \na^{TN}_{(T\map'){(Tf)}^{-1} e_\al}\left(\frac{\p}{\p y^k}\right)\\
				&={F}_{|y}^{-1}{(\pd_g)}_x \psi^k(x)\otimes \map'^*\left(\frac{\p}{\p y^k}\right) + {F}_{|y}^{-1}\left(\gamma(e_\al)\psi^k(x)\right)\otimes \na^{TN}_{T\map(e_\al)}\left(\frac{\p}{\p y^k}\right).
			\end{split}
		\end{equation}
		Thus
		\begin{equation}
		\begin{split}
			\langle \psi'&,\D_{g'}\psi'\rangle(y) \\
			&={g^f_s}_{|y}\left({F}_{|y}^{-1}{(\psi^j\circ f)}_y,{F}_{|y}^{-1}{(\pd_g)}_x \psi^k(x)\right) {h(\map'(y))}_{jk} \\
			&\quad+{g^f_s}_{|y}\left({F}_{|y}^{-1}{(\psi^j\circ f)}_y, {F}_{|y}^{-1}\left(\gamma(e_\al)\psi^k(x)\right)\right) h_{|\map'(y)}\left(\frac{\p}{\p y^j}, \na^{TN}_{T\map(e_\al)}\frac{\p}{\p y^k}\right) \\
			&={g_s}_{|x}\left(\psi^j(x),{(\pd_g)}_x\psi^k(x)\right) {h(\map(x))}_{jk} \\
			&\quad +{g_s}_{|x}\left(\psi^j(x),\gamma(e_\al)\psi^k(x)\right)h_{|\map(x)}\left(\frac{\p}{\p y^j}, \na^{TN}_{T\map(e_\al)}\left(\frac{\p}{\p y^k}\right)\right) \\
			&=\langle\psi,\D_g\psi\rangle(x).
		\end{split}
		\end{equation}
		Moreover,
		\begin{equation}
			\begin{split}
				|\psi'(y)|^2_{\gs^f\otimes\map'^*h}
				=&{\gs^f}_{|y}\left({F}_{|y}^{-1}{(\psi^j\circ f)}_y,F_{|y}^{-1}{(\psi^k\circ f)}_y\right) {h(\map'(y))}_{jk} \\
				=&{g_s}_{|x}\left(\psi^j(x),\psi^k(x)\right) {h(\map(x))}_{jk} \\
				=&|\psi(x)|^2_{g_s\otimes\map^*h}
			\end{split}
		\end{equation}
		and
		\begin{equation}
			\begin{split}
				{\Rm(\psi')}_{|y}
				&=R^N_{ijkl}(\map'(y)) {\gs^f}_{|y}\left({F}_{|y}^{-1}{(\psi^i\circ f)}_y,{F}_{|y}^{-1}{(\psi^k\circ f)}_y\right) \\
				&\qquad\times{\gs^f}_{|y}\left({F}_{|y}^{-1}{(\psi^j\circ f)}_y,{F}_{|y}^{-1}{(\psi^l\circ f)}_y\right) \\
				&={R^N(\map(x))}_{ijkl}{g_s}_{|x}\left(\psi^i(x),{(\pd_g)}_x\psi^k(x)\right)
				{g_s}_{|x}\left(\psi^j(x), {(\pd_g)}_x\psi^l(x)\right) \\
				&={\Rm(\psi)}_{|x}.
			\end{split}
		\end{equation}
	\item For the gravitino, from~\eqref{eq:equivariance of F_t} it follows that
		\begin{equation}
			Q'\chi'(y)={F}_{|y}^{-1}{\left(\gamma(e_\al)\gamma(e_\be)\chi^\al\right)}_{|f(y)}
			\otimes {(Tf)}^{-1} e_\al.
		\end{equation}
		Hence we have
		\begin{equation}
			|Q' \chi'(y)|^2_{\gs^f\otimes g_t}=|Q\chi(x)|^2_{g_s\otimes g}.
		\end{equation}
		For the mixed term, note that
		\begin{equation}
			\begin{split}
				(\mathds{1}\otimes\map'_{*})Q'\chi'(y)
				&=\gamma'\big({(Tf)}_y^{-1}e_\al\big)\gamma'\big({(Tf)}_y^{-1}e_\be\big){F}_{|y}^{-1}\chi^\al(f(y))
					\otimes\map'_{*} {(Tf)}^{-1} e_\al \\
				&={F}_{|f(y)}^{-1} {\left(\gamma(e_\al)\gamma(e_\be)\chi^\al\right)}_{f(y)}\otimes \map_{*} e_\al. \\
			\end{split}
		\end{equation}
		Then it is immediate that
		\begin{equation}
			\langle(\mathds{1}\otimes\map'_{*})Q'\chi'(y),\psi'(y)\rangle_{\gs^f\otimes\map'^*h}
			=\langle(\mathds{1}\otimes\map_*)Q\chi(x),\psi(x)\rangle_{g_s\otimes\map^*h}.
		\end{equation}
\end{enumerate}
Therefore, by the change of variable formula, the diffeomorphism invariance of \(\A\) is confirmed.

The symmetry of diffeomorphism invariance will give another conservation law\index{conservation law}.
Actually, let $X\in\Gamma(TM)$ generate a global flow $(f_t)$ as in Section~\ref{sect:diffeomorphism invariance of Dirac action}.
Then we have
\begin{equation}%
\label{eq:variation:diffeomorphism transformations}
	\begin{split}
		0&= \dt{0} \A(\map_t,\psi_t;g_t,\chi_t)\\
		&=\int_M -2\left\langle \dt{0}\map_t, EL(\map)\right\rangle + 2\left\langle \nabla^{S_g\otimes\phi_t^*TN}_{\p_t}\left((\beta_t^{-1}\otimes\mathds{1})\psi_t\right)\Big|_{t=0}, EL(\psi)\right\rangle\dv_g \\
			&\qquad+\int_M -\frac{1}{2}\left\langle\dt{0} g_t,T\right\rangle+\left\langle\dt{0}\chi_t, J\right\rangle \dv_g \\
		&=\int_M -2\left\langle \phi_*(X),EL(\map)\right\rangle+2\left\langle \delta\psi(X), EL(\psi)\right\rangle\dv_g  \\
		&\qquad\qquad	+\int_M -\frac{1}{2}\left\langle\Lie_X g, T\right\rangle+\left\langle\Lie^{S_g\otimes TM}_X \chi, J\right\rangle \dv_g.\\
	\end{split}
\end{equation}
Note that for the vector spinor we have to take the covariant derivative to obtain the variation field which is abbreviated as $\delta\psi(X)$ in the last formula, which is the same approach we took in~\cite{jost2016regularity}.
Here the Lie derivative on $S_g\otimes TM$ is the one defined in~\cite{bourguignon1992spineurs}; that is, under a local orthonormal frame $(e_\al)$, for $\chi=\chi^\al\otimes e_\al$,
\begin{equation}
	\Lie^{S_g\otimes TM}_X\chi\coloneqq
	\dt{0} \be_t^{-1}F_t^{-1}(\chi^\al\circ f_t)\otimes b_t^{-1} {(Tf_t)}^{-1}(e_\al\circ f_t).
\end{equation}
Notice, that on the vector part of $\chi$ the above $t$-derivative will not result in the ordinary Lie derivative on tangent vectors, see also~\cite{bourguignon1992spineurs}.
From Section~\ref{sect:formal divergence} one knows that
\begin{equation}
	\int_M \langle \Lie_X g,T\rangle\dv_g=-2\int_M \langle X,\diverg_g(T)\rangle\dv_g
\end{equation}
and
\begin{equation}
	\int_M \langle\Lie_X^{S_g\otimes TM} \chi, J\rangle\dv_g=\int_M \langle X, \diverg_\chi(J)\rangle\dv_g.
\end{equation}
Therefore, along solutions of the Euler--Lagrange equations, the following identity holds:
\begin{equation}
	\diverg_g(T)+\diverg_\chi(J)=0,
\end{equation}
where the formal divergence operator $\diverg_\chi$ is defined in Section~\ref{sect:formal divergence}.

\begin{remark}
	If the gravitino vanishes, then from~\eqref{eq:variation:diffeomorphism transformations} we know that, along solutions of the Euler--Lagrange equations,
	\begin{equation}
		0=\int_M -\frac{1}{2}\langle \Lie_X g,T\rangle\dv_g=\int_M  \langle X, \diverg_g T\rangle\dv_g.
	\end{equation}
	This tells us that $T$ is divergence-free, and hence the energy-momentum tensor corresponds to a holomorphic quadratic differential.
\end{remark}

Before going to the discussion on super symmetry, we summarize the results obtained up to now in the following theorem.
\begin{thm}%
\label{thm:SymmetryaboutActionwithGravitino}
	Consider the super action functional defined by~\eqref{eq:action functional}.
	\begin{enumerate}
		\item The total variation formula is
			\begin{equation}
				\delta\A
				=\int_M -2\langle\delta\map, EL(\map)\rangle+2\langle\delta\psi,EL(\psi)\rangle
					-\frac{1}{2}\langle\delta g,T\rangle+\langle\delta\chi, J\rangle \dv_g,
			\end{equation}
			where the Euler--Lagrange equations are given in~\eqref{EL-eq} and the energy-momentum tensor $T$ is given by~\eqref{eq:super action:energy-momentum} and the supercurrent \(J\) is given by~\eqref{eq:supercurrent}.
		\item This action functional is invariant under rescaled conformal transformations:
			\begin{equation}%
				\A\left(\map,e^{-\frac12 u}(\beta\otimes \mathds{1})\psi;e^{2u}g,e^{-\frac12 u}(\beta\otimes b)\chi\right)=\A\left(\map,\psi;g,\chi\right).
			\end{equation}
			Consequently, $\tr_g(T)=- \left<\psi, EL(\psi)\right> - \frac12\left<\chi, J\right>$, even off shell.
		\item This action is invariant under super Weyl transformations.
			That is, for any $\zeta\in \Gamma(S_g\otimes TM)$, it holds that $\A(\map,\psi;g,\chi+P\zeta)=\A(\map,\psi;g,\chi)$.
			Consequently, $PJ=0$, or equivalently, \(J\) is a smooth section of \(S_g\otimes_\C TM\).
		\item This action functional is invariant under diffeomorphisms.
			That is, it is invariant under the transformation~\eqref{eq:super action:diffeomorphism transformation} for $f\in \Diff(M)$.
			Consequently, along solutions of the Euler--Lagrange Equations~\eqref{EL-eq}, the coupled conservation law holds
			\begin{equation}\label{eq:coupled conservation law}
				\diverg_g(T)+\diverg_\chi(J)=0.
			\end{equation}
		\item If either $\chi= 0$ or $J=0$, then along solutions of the Euler--Lagrange equations, the energy-momentum tensor $T$ is symmetric, traceless and divergence-free.
			Hence it corresponds to a holomorphic quadratic differential on $M$.
	\end{enumerate}
\end{thm}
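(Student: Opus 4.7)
The plan is to assemble the theorem from the detailed computations already carried out in Section~\ref{sect:pseudo supersymmetric action}; no genuinely new calculation is required, the content is the packaging of the individual symmetry analyses.

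For part~(1), I would combine the four separate linearizations of \(\A\): the standard variation with respect to \(\map\) giving \(EL(\map)\); the variation with respect to the vector spinor \(\psi\), carried out as in Section~\ref{sect:ConformalInvarianceDiracAction} (and recalled from~\cite{jost2016regularity}) giving \(EL(\psi)\); the metric variation done term by term in items~\ref{item:EMTHarmonic}--\ref{item:EMTVolumeForm}, which yields the energy-momentum tensor~\eqref{eq:super action:energy-momentum}; and the gravitino variation from the supercurrent subsection, which yields~\eqref{eq:supercurrent}. Summing gives the total variation formula.

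For parts~(2) and~(3), the invariance identities themselves have already been verified: rescaled conformal invariance was established term-by-term in~\eqref{eq:super action:rescaled conformal invariance}--\eqref{eq:ConformalInvarianceV}, while super Weyl invariance~\eqref{eq:super action:SuperWeylInvariance} is immediate from \(QP=0\) together with the fact that \(\chi\) only enters \(\A\) through \(Q\chi\). To deduce the algebraic constraints on \(T\) and \(J\), I would differentiate each invariance along an appropriate one-parameter family—\(g_t=e^{2tu}g\) with matching rescalings of \(\psi\) and \(\chi\) for~(2), and \(\chi_t=\chi+tP\zeta\) for~(3)—and compare with the total variation formula from part~(1). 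Since \(u\in C^\infty(M)\) and \(\zeta\in\Gamma(S_g\otimes TM)\) are arbitrary, the fundamental lemma of the calculus of variations yields the pointwise identities \(\tr_g(T)=-\langle\psi,EL(\psi)\rangle-\frac12\langle\chi,J\rangle\) and \(PJ=0\).

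For part~(4), diffeomorphism invariance under the transformation~\eqref{eq:super action:diffeomorphism transformation} has already been checked term-by-term using the equivariance relation~\eqref{eq:equivariance of F_t} of the induced spinor map \(F\) and the change-of-variable formula. The conservation law then comes from differentiating the invariance along the flow of an arbitrary \(X\in\Gamma(TM)\), substituting the total variation formula from part~(1), and applying the formal integration by parts formulas
\begin{equation*}
\int_M \langle \Lie_X g, T\rangle \dv_g = -2\int_M \langle X, \diverg_g T\rangle \dv_g, \qquad \int_M \langle \Lie_X^{S_g\otimes TM}\chi, J\rangle \dv_g = \int_M \langle X, \diverg_\chi J\rangle \dv_g
\end{equation*}
from Section~\ref{sect:formal divergence}. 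Restricting to solutions of~\eqref{EL-eq} and letting \(X\) be arbitrary then gives the coupled conservation law~\eqref{eq:coupled conservation law}.

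For part~(5), if \(\chi=0\) the Lie-derivative \(\Lie_X^{S_g\otimes TM}\chi\) vanishes identically, so the diffeomorphism identity from part~(4) collapses to \(\diverg_g T=0\); if instead \(J=0\), the pairing \(\langle \Lie_X^{S_g\otimes TM}\chi,J\rangle\) is trivially zero and the same conclusion holds. Similarly the trace identity from part~(2) gives \(\tr_g T=0\) in either case, since \(EL(\psi)=0\) on shell and the remaining coupling \(\langle\chi,J\rangle\) vanishes. Combined with the symmetry which is built into the construction, \(T\) is a symmetric traceless divergence-free \(2\)-tensor on the Riemann surface \(M\); in a local isothermal coordinate \(z=x^1+ix^2\), a direct calculation shows that these three conditions translate to \(\overline{\partial}\)-closedness of the coefficient of the \((2,0)\)-component \(T_{zz}\,\dd z\otimes\dd z\), i.e.\ holomorphicity of the corresponding quadratic differential. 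The only real obstacle is bookkeeping: keeping the variation fields for each symmetry, the isometries \(\beta\), \(b\), \(F\), and the two formal divergence operators \(\diverg_g\), \(\diverg_\chi\) straight; all of this has already been done in the preceding subsections.
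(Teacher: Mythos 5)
Your proposal is correct and follows essentially the same route as the paper: the theorem is indeed a summary statement, and its proof consists precisely of assembling the term-by-term invariance verifications, the computations of \(T\) and \(J\), the linearizations of each invariance compared against the total variation formula, and the formal divergence identities from the appendix, exactly as you describe. Your handling of part~(5)—noting that \(\chi=0\) kills \(\Lie_X^{S_g\otimes TM}\chi\) while \(J=0\) kills the pairing, and that the trace identity degenerates on shell in either case—matches the paper's remark, with the only cosmetic difference being that you sketch the isothermal-coordinate argument for holomorphicity where the paper cites Tromba.
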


\subsection{Degenerate super symmetry}\label{sect:degenerate super symmetry}
The action functional~\eqref{eq:action functional} is motivated from the action functional of two-dimensional super symmetric sigma models~\cite{brink1976locally} also called the super conformal action functional in~\cite{jost2014super}.
The major reason for the introduction of those models was super symmetry.
As was argued in~\cite{kessler2016functional} super symmetry requires anti-commutative variables.
Hence a full super symmetry cannot be expected for the action functional~\eqref{eq:action functional}.
Surprisingly, the following special case of super symmetry, which we will call degenerate super symmetry, persists:
\begin{prop}%
\label{prop:DegenerateSUSY}
	Let \(q\) be a section of \(S_g\).
	The action functional~\eqref{eq:action functional} is invariant under the following infinitesimal transformations
	\begin{align}
		\delta\map &= \left<q, \psi\right>_{\gs} &
		\delta\psi&= -\gamma(\grad\map)q \\
		\delta g &= 0 &
		\delta \chi &= {\left(\nabla^s q\right)}_\sharp
	\end{align}
	where ${\left(\nabla^s q\right)}_\sharp\equiv \na^s_{e_\al}q\otimes e_\al\in\Gamma(S_g\otimes TM)$, given that \(\chi=0\) and the following holds:
	\begin{multline}\label{eq:DegeneratedSUSYCurvatureCondition}
		\int_M 6\left<\psi, R^N\left(\left<q, \psi\right>_{\gs}, \map_* e_\alpha\right) \gamma(e_\al)\psi\right> +\left<S\nabla R(\psi), \left<q, \psi\right>_{\gs}\right> \\
		- 4\left<SR(\psi), \gamma(\grad\map)q\right> \dd{vol}_g = 0
	\end{multline}
	The last condition is in particular fulfilled if the target manifold \(N\) is flat.
\end{prop}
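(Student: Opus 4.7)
The plan is to substitute the proposed infinitesimal variations into the total variation formula of Theorem~\ref{thm:SymmetryaboutActionwithGravitino} and to show that the resulting integrand reduces exactly to the left-hand side of~\eqref{eq:DegeneratedSUSYCurvatureCondition}. Since \(\delta g = 0\) and we evaluate at \(\chi = 0\), the total variation becomes
\[
\delta\A = \int_M -2\langle\langle q,\psi\rangle_{\gs}, EL(\map)\rangle_{\map^*h} - 2\langle \gamma(\grad\map)q, EL(\psi)\rangle + \langle (\nabla^s q)_\sharp, J\rangle \, \dv_g,
\]
where the Euler--Lagrange expressions from~\eqref{EL-eq} are to be taken at \(\chi = 0\) and the supercurrent simplifies via~\eqref{eq:supercurrent} to \(J^\alpha = 2\langle \map_* e_\beta, \gamma(e_\beta)\gamma(e_\alpha)\psi\rangle_{\map^*h}\).

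The first key step is an integration by parts on the last term, using that \(M\) is closed, to rewrite \(\int_M \langle (\nabla^s q)_\sharp, J\rangle\dv_g = -\int_M g_s(q, \nabla^s_{e_\alpha} J^\alpha)\dv_g\). Using the Clifford relation \(\gamma(e_\beta)\gamma(e_\alpha) = -g_{\alpha\beta}\,\id + \tfrac12 [\gamma(e_\beta), \gamma(e_\alpha)]\) and the chain rule for \(\widetilde\nabla\) on \(S_g\otimes\map^*TN\), the divergence \(\nabla^s_{e_\alpha} J^\alpha\) decomposes into a trace piece containing the tension field \(\tau(\map)\) paired with \(\psi\), and a piece of the form \(\langle \map_* e_\beta, \gamma(e_\beta)\gamma(e_\alpha)\widetilde\nabla_{e_\alpha}\psi\rangle\), which after further Clifford manipulation produces the Dirac operator \(\D\psi\) and a contraction with \(R^{\map^*TN}\).

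The second key step is to combine these pieces with the non-curvature contributions of \(-2\langle\delta\map, EL(\map)\rangle\) and \(-2\langle\gamma(\grad\map)q, EL(\psi)\rangle\). The \(\tau(\map)\) in \(EL(\map)\) and the \(\D\psi\) in \(EL(\psi)\) are precisely cancelled by the kinetic contributions from \(\nabla^s_{e_\alpha} J^\alpha\). What remains are four curvature terms: the \(R^{\map^*TN}(\psi, e_\alpha\cdot\psi)\map_* e_\alpha\) term and the \(S\nabla R(\psi)\) term from \(EL(\map)\), the \(SR(\psi)\) term from \(EL(\psi)\), and the curvature piece generated by the differentiation of \(J^\alpha\). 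Using skew-symmetries of \(R^N\) and the first Bianchi identity, these reassemble into the integrand of~\eqref{eq:DegeneratedSUSYCurvatureCondition}. The flat case is then immediate, since \(R^N \equiv 0\) forces \(SR(\psi)\), \(S\nabla R(\psi)\) and every remaining curvature expression to vanish.

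The principal obstacle will be the combinatorial bookkeeping of the Clifford and Fierz identities required to match coefficients: one must check that the factors \(\tfrac16, \tfrac1{12}, \tfrac13\) appearing in the action and its Euler--Lagrange equations conspire — after the integration by parts, the Clifford rearrangement, and the Bianchi identity — to produce precisely the coefficients \(6, 1, -4\) in~\eqref{eq:DegeneratedSUSYCurvatureCondition}, with no spurious leftover terms. A secondary technical point is to justify the spinorial integration by parts in a manifestly frame-independent way, which is permitted by the formalism of Section~\ref{sect:formal divergence}.
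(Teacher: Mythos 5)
Your proposal is correct, but it takes a genuinely different route from the paper. The paper never invokes the total variation formula: it recomputes the first variation of each term of the action directly (using the variation formulas for \(\D\psi\) and \(\Rm^N(\psi)\) cited from~\cite{jost2016regularity}), removes the curvature terms by hypothesis~\eqref{eq:DegeneratedSUSYCurvatureCondition}, and then shows that the remaining kinetic integrand vanishes by Clifford relations, torsion-freeness of the connection and integrating away a total derivative \(e_\al\langle\cdots\rangle\); in particular the invariance is exhibited by a self-contained off-shell calculation. Your route instead reuses the machinery established before the proposition: the total variation formula of Theorem~\ref{thm:SymmetryaboutActionwithGravitino}, the supercurrent~\eqref{eq:supercurrent} at \(\chi=0\), and the spinorial integration by parts \(\int_M\langle{(\na^s q)}_\sharp, J\rangle\dv_g=-\int_M\langle q,\diverg_g J\rangle\dv_g\) from Section~\ref{sect:formal divergence}. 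This works and is arguably more economical: one finds \(\diverg_g J=-2\langle\tau(\map),\psi\rangle+2\langle\map_*e_\be,\gamma(e_\be)\D\psi\rangle\), and after using skew-adjointness of Clifford multiplication these two terms cancel exactly against the \(\tau(\map)\) term of \(-2\langle\delta\map,EL(\map)\rangle\) and the \(\D\psi\) term of \(2\langle\delta\psi,EL(\psi)\rangle\). Two corrections to your bookkeeping, though: differentiating \(J^\al\) produces \emph{no} curvature contraction (no covariant derivatives are ever commuted), so only three curvature terms survive, namely the \(R^{\map^*TN}\) and \(S\na R\) terms of \(EL(\map)\) and the \(SR\) term of \(EL(\psi)\); and these reassemble into \(-\tfrac16\) times the integrand of~\eqref{eq:DegeneratedSUSYCurvatureCondition} using only the pair symmetry of \(R^N\) --- no Fierz or first Bianchi identity is needed. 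The matching of the coefficients \(\tfrac1{12}\), \(\tfrac13\) in~\eqref{EL-eq} with \(6,1,-4\) in the condition is then automatic, since the Euler--Lagrange expressions were derived from the same action; the only price of your approach is that it leans on the consistency of the cited Euler--Lagrange equations with the variation formulas, whereas the paper's computation does not.
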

\begin{remark}
	The degenerate super symmetry transformations of Proposition~\ref{prop:DegenerateSUSY} coincide up to the sign of \(\delta\psi\) and \(\delta\chi\) with the super symmetry transformations given in~\cite{kessler2016thesis} for the super conformal action functional.
	The sign difference is necessary to compensate for the use of the opposite Clifford algebra in this work.
\end{remark}
\begin{proof}
	The variation of \(\A(\map, \psi; g, \chi)\) is given by
	\begin{equation}
		\begin{split}
			\delta\A(\map, \psi; g, \chi) &= \int_M 2 g^{\alpha\beta}\map^*h\left(\nabla^{\map^*TN}_{e_\alpha}\left<q, \psi\right>, \map_*e_\beta\right) - \left< \gamma(e_\al) q\otimes \map_* e_\alpha, \D\psi\right> \\
				&\qquad - \left<\psi, \D\left(\gamma(e_\al) q\otimes \map_*e_\alpha\right) + R^N\left(\left<q, \psi\right>_{\gs}, \map_* e_\alpha\right) \gamma(e_\al)\psi\right> \\
				&\qquad + 2 \left< \gamma(e_\al)\gamma(e_\be) \nabla_{e_\alpha} q\otimes \map_*e_\beta , \psi\right> \\
				&\qquad - \frac16\left(\left<S\nabla R(\psi), \left<q, \psi\right>_{\gs}\right> - 4\left<SR(\psi), \gamma(\grad\map)q\right>\right) \dv
		\end{split}
	\end{equation}
	Here we have used
	\begin{align}
		\delta\left(\D\psi\right) &= \D\left(\delta\psi\right) + R^N\left(\delta\map, \map_* e_\alpha\right) \gamma(e_\al)\psi, \\
		\delta\left(R^N(\psi)\right) &= \left<S\nabla R(\psi), \delta\map\right> + 4 \left<SR(\psi), \delta\psi\right>,
	\end{align}
	compare~\cite[Section 4.1, (2) and (5)]{jost2016regularity}.
	If we now use~\eqref{eq:DegeneratedSUSYCurvatureCondition}, the variation of the action reduces to:
	\begin{align}
			\delta\A(\map&, \psi; g, \chi) = \int_M 2 g^{\alpha\beta}\map^*h\left(\nabla_{e_\alpha}\left<q, \psi\right>, \map_*e_\beta\right) - \left< \gamma\left(e_\alpha\right) q\otimes \map_* e_\alpha, \D\psi\right> \\
				&\hspace{6em} - \left<\psi, \D\left(\gamma\left(e_\alpha\right) q\otimes \map_*e_\alpha\right)\right> + 2 \left< \gamma\left(e_\alpha\right)\gamma\left(e_\beta\right) \nabla_{e_\alpha} q\otimes \map_*e_\beta , \psi\right>\dv_g \\
			&= \int_M 2g^{\alpha\beta} \left(\left<\nabla_{e_\alpha}q\otimes \map_*e_\beta, \psi\right> + \left<q\otimes \map_*e_\beta, \nabla_{e_\alpha} \psi\right>\right) + \left<\gamma\left(e_\beta\right)\gamma\left(e_\alpha\right) q\otimes \map_*e_\alpha, \nabla_{e_\beta}\psi\right> \\
				&\qquad - \left<\gamma\left(e_\beta\right)\left(\gamma\left(\nabla_{e_\beta}e_\alpha\right)q \otimes\map_*e_\alpha + \gamma\left(e_\alpha\right)\nabla_{e_\beta} q\otimes \map_*e_\alpha + \gamma\left(e_\alpha\right) q\otimes \nabla_{e_\beta}\map_*e_\alpha\right), \psi\right> \\
				&\qquad + 2 \left< \gamma\left(e_\alpha\right)\gamma\left(e_\beta\right) \nabla_{e_\alpha} q\otimes \map_*e_\beta , \psi\right>\dv_g \displaybreak[0]\\
			&= \int_M -\left<\gamma\left(e_\beta\right)\gamma\left(e_\alpha\right)\nabla_{e_\alpha}q\otimes \map_*e_\beta, \psi\right> - \left<\gamma\left(e_\beta\right)\gamma\left(e_\alpha\right) q\otimes \nabla_{e_\alpha}\map_*e_\beta, \psi\right> \\
				&\qquad - \left<\gamma\left(e_\beta\right)\gamma\left(e_\alpha\right) q\otimes \map_*e_\beta, \nabla_{e_\alpha}\psi\right> - \left<\gamma\left(e_\beta\right)\gamma\left(\nabla_{e_\beta}e_\alpha\right)q \otimes\map_*e_\beta, \psi\right> \\
				&\qquad + \left<\gamma\left(e_\beta\right)\gamma\left(e_\alpha\right) q\otimes \left(\nabla_{e_\alpha}\map_*e_\beta - \nabla_{e_\beta}\map_*e_\alpha\right), \psi\right> \dv_g \displaybreak[0]\\
			&= \int_M -e_\alpha\left<\gamma\left(e_\beta\right)\gamma\left(e_\alpha\right) q\otimes \map_*e_\beta, \psi\right> + \left<\left(\gamma\left(\nabla_{e_\alpha}e_\beta\right)\gamma\left(e_\alpha\right) + \gamma\left(e_\beta\right)\gamma\left(\nabla_{e_\alpha}e_\alpha\right)\right)q\otimes \map_*e_\beta, \psi\right> \\
				&\qquad - \left<\gamma\left(e_\beta\right)\gamma\left(\nabla_{e_\beta}e_\alpha\right)q \otimes\map_*e_\beta, \psi\right> + \left<\gamma\left(e_\beta\right)\gamma\left(e_\alpha\right) q\otimes \left(\nabla_{e_\alpha}\map_*e_\beta - \nabla_{e_\beta}\map_*e_\alpha\right), \psi\right> \dv_g \\
			&= 0.
	\end{align}
	Here in the last step we have used the torsion-freeness of the connection.

\end{proof}

\begin{thm}
	Suppose as in Proposition~\ref{prop:DegenerateSUSY} that \(\chi=0\) and condition~\eqref{eq:DegeneratedSUSYCurvatureCondition} is fulfilled.
	In addition we assume that \(\map\) and \(\psi\) fulfill the Euler--Lagrange equations.
	Then \(\diverg_g J=0\).
	In particular, the supercurrent \(J\) can be identified with a holomorphic section of \(TM\otimes_\C S\).
\end{thm}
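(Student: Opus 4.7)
The plan is to apply Noether's theorem to the degenerate super symmetry established in Proposition~\ref{prop:DegenerateSUSY}: the hypothesis \(\chi=0\) together with the curvature condition~\eqref{eq:DegeneratedSUSYCurvatureCondition} already tells us \(\delta\A=0\) under the infinitesimal transformations \(\delta\map=\langle q,\psi\rangle_{\gs}\), \(\delta\psi=-\gamma(\grad\map)q\), \(\delta g=0\), \(\delta\chi=(\nabla^s q)_\sharp\). Plugging these into the total variation formula of Theorem~\ref{thm:SymmetryaboutActionwithGravitino}(1) and using that \(\map,\psi\) satisfy their Euler--Lagrange equations so the first two terms drop out, together with \(\delta g=0\) killing the third, one is left with
\[
0 = \int_M \bigl\langle (\nabla^s q)_\sharp, J\bigr\rangle_{g_s\otimes g}\,\dv_g.
\]

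Next, I would use the integration-by-parts formula for the formal divergence \(\diverg_g\colon \Gamma(S_g\otimes TM)\to\Gamma(S_g)\) developed in Section~\ref{sect:formal divergence}. Since \(M\) is closed there is no boundary contribution, and the adjoint relation reads
\[
\int_M \bigl\langle (\nabla^s q)_\sharp, J\bigr\rangle_{g_s\otimes g}\,\dv_g = -\int_M g_s\bigl(q,\diverg_g J\bigr)\,\dv_g.
\]
Because \(q\in\Gamma(S_g)\) is arbitrary, this forces \(\diverg_g J=0\), which is the first conclusion.

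For the identification of \(J\) with a holomorphic section, I would combine this new divergence-free property with the super Weyl invariance part of Theorem~\ref{thm:SymmetryaboutActionwithGravitino}(3), which already gives \(PJ=0\), so \(J\in\Gamma(\Ima Q)\). As recorded after equation~\eqref{eq:super action:SuperWeylInvariance}, \(\Ima Q\) is naturally a holomorphic vector bundle on the Riemann surface, with an explicit description in terms of \(W\) and \(W^\vee\) isomorphic to \(TM\otimes_\C S\). On such a holomorphic bundle over a Riemann surface the \((0,1)\)-part of the Hermitian connection coincides with \(\bar\partial\); decomposing the real covariant divergence into its \((1,0)\) and \((0,1)\) parts, the condition \(\diverg_g J=0\), restricted to the holomorphic summand \(\Ima Q\), is equivalent to \(\bar\partial J=0\), so \(J\) is holomorphic.

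The main obstacle I expect is purely technical: verifying that the formal divergence operator on \(\Gamma(S_g\otimes TM)\) from Section~\ref{sect:formal divergence} is indeed the \(L^2\)-adjoint of \(q\mapsto (\nabla^s q)_\sharp\) with the correct sign, and that, once restricted to the holomorphic sub-bundle \(\Ima Q\), its vanishing really corresponds to holomorphicity rather than anti-holomorphicity. Both are bookkeeping matters about the complex structure \(\ACSBP\) and the identification \(W\otimes_\C W\cong T^*M\) recalled in the discussion of \(\Ima Q\); once the signs are aligned, the result is essentially a one-line consequence of Noether's theorem applied with spinor parameter \(q\).
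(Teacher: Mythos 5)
Your proposal is correct and follows essentially the same route as the paper: invariance under the degenerate super symmetry (Proposition~\ref{prop:DegenerateSUSY}) plus the Euler--Lagrange equations reduce the total variation to \(0=\int_M \langle (\nabla^s q)_\sharp, J\rangle\,\dv_g\), the integration-by-parts identity of Section~\ref{sect:formal divergence} converts this to \(-\int_M\langle q,\diverg_g J\rangle\,\dv_g\), and arbitrariness of \(q\) gives \(\diverg_g J=0\), with holomorphicity then following from \(PJ=0\) exactly as recorded in the appendix.
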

\begin{proof}
	In the particular case that \(\map\) and \(\psi\) fulfill the Euler--Lagrange equations, we know that for all super symmetry parameters \(q\):
	\begin{equation}
		0 = \delta\A(\map, \psi; g, \chi)
		= \int_M \left<\delta \chi, J\right> \dv
		= \int_M \left<{(\nabla^s q)}_\sharp, J\right>\dv
		= -\int_M \left<q, \diverg_g J\right> \dv.
	\end{equation}

\end{proof}

In the remainder of this subsection we give an application of the degenerate super symmetry to the action functional of Dirac harmonic maps, with or without curvature term.
Recall that the functional of Dirac-harmonic maps with curvature term in~\cite{chen2008liouville} can be obtained from the functional~\eqref{eq:action functional} by setting the gravitino to zero.
It is a Corollary of Proposition~\ref{prop:DegenerateSUSY} that the functional of Dirac-harmonic maps with or without curvature terms also has a degenerate super symmetry.

\begin{cor}[Degenerate super symmetry of the functional of Dirac-harmonic maps with curvature term]%
\label{cor:DegenerateSUSYDiracHarmonicMapswithCurvature}
	Let \(q\in\Gamma(\SpinorBundleM_g)\) be a twistor spinor.
	The functional of Dirac-harmonic maps with curvature term is invariant under the following infinitesimal transformations
	\begin{align}
		\delta\map &= \left<q, \psi\right>_{\gs} &
		\delta\psi&= -\gamma(\grad\map)q
	\end{align}
	provided that the curvature condition~\eqref{eq:DegeneratedSUSYCurvatureCondition} holds.
\end{cor}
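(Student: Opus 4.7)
My plan is to deduce the corollary from Proposition~\ref{prop:DegenerateSUSY} by isolating the extra contribution that the super symmetry produces through $\delta\chi = (\na^s q)_\sharp$, and showing that this term vanishes whenever $q$ is a twistor spinor.

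First, I would identify the Dirac-harmonic map functional with curvature term as the restriction $\A(\map,\psi;g,0)$ of the full super action. Under the proposed transformations $\delta\map = \langle q,\psi\rangle_{\gs}$ and $\delta\psi = -\gamma(\grad\map)q$, with $\delta g = 0$ and $\delta\chi = 0$, the variation $\delta E$ equals the Euler--Lagrange part of the total variation formula of Theorem~\ref{thm:SymmetryaboutActionwithGravitino} evaluated at $\chi=0$. Proposition~\ref{prop:DegenerateSUSY}, applied with $\chi=0$ and the assumed curvature condition, asserts that $\delta\A=0$ under the same $\delta\map$ and $\delta\psi$ but with the additional gravitino variation $\delta\chi = (\na^s q)_\sharp$. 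Subtracting these two identities via the total variation formula yields
\[
    \delta E \;=\; -\int_M \bigl\langle (\na^s q)_\sharp,\, J|_{\chi=0}\bigr\rangle_{\gs\otimes g}\, \dv_g,
\]
so the proof reduces to checking that this single correction integral is zero.

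Second, I would exploit the orthogonal decomposition $S_g\otimes TM = \Ima P \oplus \Ima Q$ recalled earlier in the paper. On the one hand, the super Weyl invariance of $\A$, already established, forces $PJ=0$, so $J\in\Gamma(\Ima Q)$. On the other hand, the twistor equation $\na^s_X q = -\frac{1}{2}\gamma(X)\pd q$ (the correct constant in dimension two under the paper's Clifford convention) gives a short computation
\[
    \bigl(P(\na^s q)_\sharp\bigr)^{\be}
    \;=\; -\frac{1}{2}\gamma(e_\be)\gamma(e_\al)\na^s_{e_\al} q
    \;=\; -\frac{1}{2}\gamma(e_\be)\,\pd q
    \;=\; \na^s_{e_\be} q,
\]
showing that $(\na^s q)_\sharp\in\Gamma(\Ima P)$. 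Since the two summands are fiberwise orthogonal, the integrand vanishes pointwise and $\delta E = 0$, as required.

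The main obstacle I foresee is exactly the bookkeeping carried out in the first paragraph: Proposition~\ref{prop:DegenerateSUSY} involves a nontrivial $\delta\chi$ that has no counterpart in the Dirac-harmonic functional, and a priori nothing prevents the associated correction from spoiling the symmetry once one drops the gravitino from the list of fields. The twistor condition on $q$ is precisely what places $(\na^s q)_\sharp$ into the orthogonal complement of the super Weyl constraint $J\in\Ima Q$, which is what makes the reduction work.
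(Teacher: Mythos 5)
Your proof is correct, and its second half takes a genuinely different route from the paper's. The reduction in your first paragraph is, up to bookkeeping, the paper's own first step: the paper re-runs the computation of Proposition~\ref{prop:DegenerateSUSY} and isolates the term that there was supplied by \(\delta\chi={(\na^s q)}_\sharp\), namely \(\int_M 2\langle\gamma(e_\al)\gamma(e_\be)\na^s_{e_\al}q\otimes\map_*e_\be,\psi\rangle\dv_g\), which is exactly your \(\int_M\langle{(\na^s q)}_\sharp, J|_{\chi=0}\rangle\dv_g\) after moving the Clifford factors across the inner product by skew-adjointness. Where you diverge is in showing this term vanishes. The paper does it by a pointwise Clifford computation, \(\gamma(e_\al)\gamma(e_\be)\na^s_{e_\al}q=-\left(2\na^s_{e_\be}q+\gamma(e_\be)\pd_g q\right)\), which the twistor equation kills directly. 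You instead combine two structural facts: the super Weyl conservation law \(PJ=0\), which the paper establishes off shell (important here, since the corollary is an off-shell statement about the functional, not about critical points), and the observation that the twistor equation is precisely the condition \(Q{(\na^s q)}_\sharp=0\), so that \({(\na^s q)}_\sharp\) and \(J\) lie in the two orthogonal summands \(\Ima P\) and \(\Ima Q\) and the integrand vanishes pointwise by self-adjointness of \(P\). At bottom both arguments rest on the same Clifford identity --- your evaluation of \(P{(\na^s q)}_\sharp\) is the paper's manipulation in disguise --- but your packaging buys a conceptual explanation of the hypothesis: a twistor spinor is exactly a super symmetry parameter whose induced gravitino variation is pure super Weyl, hence invisible to the action, which is the content of the paper's remark after the corollary about the \(\frac32\)-part of the gravitino. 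The paper's version, in turn, is more self-contained: it needs neither the total variation formula of Theorem~\ref{thm:SymmetryaboutActionwithGravitino} nor the off-shell validity of \(PJ=0\).
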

\begin{proof}
	The calculation proceeds as in the proof of Proposition~\ref{prop:DegenerateSUSY} with the additional condition that the term that describes the variation of the gravitino needs to be zero.
	The term of \(\delta\A(\map, \psi; g, \chi)\) that arises from the variation of the gravitino is
	\begin{equation}
		\int_M 2\left< \gamma\left(e_\alpha\right)\gamma\left(e_\beta\right) \nabla^s_{e_\alpha} q\otimes \map_*e_\beta , \psi\right>\dv_g
		= -\int_M 2\left< \left(2\nabla^s_{e_\beta} q + \gamma\left(e_\beta\right)\pd_g q\right)\otimes \map_*e_\beta , \psi\right>\dv_g.
	\end{equation}
	This term vanishes if \(q\) is a twistor spinor, i.e.\ for all vector field \(X\) it holds
	\begin{equation}
		\nabla^s_X q + \frac12\gamma(X)\pd_g q=0.
	\end{equation}

\end{proof}
\begin{remark}
	Notice that it is particular to the two-dimensional setup that the condition to be a twistor spinor or a holomorphic spinor are identical.
	Hence, Corollary~\ref{cor:DegenerateSUSYDiracHarmonicMapswithCurvature} does not come as a surprise, as also for super Riemann surfaces a holomorphic super symmetry leaves the \(\frac32\)-part of the gravitino invariant, see~\cite[Chapter 11.1]{kessler2016thesis}.
\end{remark}

Recall that the Dirac-harmonic map functional in~\cite{chen2006dirac} does not include the curvature term of the target manifold.
But note that a curvature term will arise when taking variations.
\begin{cor}[Degenerate super symmetry of the Dirac-harmonic map functional]%
\label{cor:DegenerateSUSYforDH}
	Let \(q\in\Gamma(\SpinorBundleM_g)\) be a twistor spinor.
	The functional of Dirac-harmonic maps is invariant under the following infinitesimal transformations
	\begin{align}
		\delta\map &= \left<q, \psi\right>_{\gs} &
		\delta\psi&= -\gamma(\grad\map)q
	\end{align}
	provided that the following curvature condition
	\begin{equation}\label{eq:DiracharmonicCurvatureCondition}
		\int_M\left\langle\psi, R\left(\langle q,\psi\rangle_{\gs},\phi_*e_\al\right)\gamma(e_\al)\psi\right\rangle_{\gs\otimes\map^*h}\dv_g =0.
	\end{equation}
	holds.
\end{cor}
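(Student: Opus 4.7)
The plan is to repeat the computation from the proof of Proposition~\ref{prop:DegenerateSUSY}, specialized to the Dirac-harmonic map action, namely \(\A(\map,\psi;g,0)\) with the curvature term \(-\tfrac{1}{6}\Rm^N(\psi)\) omitted, and then to invoke the twistor-spinor hypothesis exactly as in Corollary~\ref{cor:DegenerateSUSYDiracHarmonicMapswithCurvature}.

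First I would write down \(\delta\A\) under the prescribed transformations \(\delta\map=\langle q,\psi\rangle_{\gs}\) and \(\delta\psi=-\gamma(\grad\map)q\), with \(\delta g=0\) and no \(\chi\)-variation to worry about. Only two groups of terms appear: the Dirichlet energy contributes \(2g^{\alpha\beta}\map^*h\bigl(\na^{\map^*TN}_{e_\alpha}\langle q,\psi\rangle,\map_*e_\beta\bigr)\), while the Dirac term \(\langle\psi,\D\psi\rangle\) gives, via the identity
\begin{equation}
\delta(\D\psi)=\D(\delta\psi)+R^N(\delta\map,\map_*e_\alpha)\gamma(e_\alpha)\psi
\end{equation}
recalled in the proof of Proposition~\ref{prop:DegenerateSUSY}, the three contributions
\begin{equation}
-\langle\gamma(e_\alpha)q\otimes\map_*e_\alpha,\D\psi\rangle-\langle\psi,\D(\gamma(e_\alpha)q\otimes\map_*e_\alpha)\rangle-\langle\psi,R^N(\langle q,\psi\rangle_{\gs},\map_*e_\alpha)\gamma(e_\alpha)\psi\rangle.
\end{equation}
The curvature contributions that appear in Proposition~\ref{prop:DegenerateSUSY} through \(\delta(\Rm^N(\psi))=\langle S\nabla R(\psi),\delta\map\rangle+4\langle SR(\psi),\delta\psi\rangle\) are absent here because the curvature term is not part of the Dirac-harmonic action.

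Next I would perform the same algebraic rearrangement as in Proposition~\ref{prop:DegenerateSUSY}: expand the Dirac operator acting on \(\gamma(e_\alpha)q\otimes\map_*e_\alpha\), integrate by parts the Dirichlet piece to trade \(\na_{e_\alpha}\langle q,\psi\rangle\) for \(\langle\na_{e_\alpha}q,\psi\rangle+\langle q,\na_{e_\alpha}\psi\rangle\), and combine the two \(\psi\)-containing terms. Exactly as in the proof of Corollary~\ref{cor:DegenerateSUSYDiracHarmonicMapswithCurvature}, the terms that do not vanish by torsion-freeness of \(\nabla\) collect into
\begin{equation}
2\int_M\langle(2\na^s_{e_\beta}q+\gamma(e_\beta)\pd_g q)\otimes\map_*e_\beta,\psi\rangle\dv_g,
\end{equation}
which is precisely the obstruction that vanishes whenever \(q\) is a twistor spinor, i.e.\ \(\na^s_X q+\tfrac12\gamma(X)\pd_g q=0\) for all \(X\).

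What remains of \(\delta\A\) is then the single integral
\begin{equation}
-\int_M\langle\psi,R^N(\langle q,\psi\rangle_{\gs},\map_*e_\alpha)\gamma(e_\alpha)\psi\rangle\dv_g,
\end{equation}
which is exactly the negative of the integrand in the hypothesis~\eqref{eq:DiracharmonicCurvatureCondition}. Imposing that condition closes the argument. I expect no genuine obstacle: the bookkeeping is identical to that already carried out, and the key simplification is that dropping \(\Rm^N(\psi)\) from the action automatically removes the \(S\nabla R\) and \(SR\) contributions from~\eqref{eq:DegeneratedSUSYCurvatureCondition}, leaving only the one curvature term produced by the non-commutativity of the covariant derivatives on \(\map^*TN\), whose cancellation is exactly~\eqref{eq:DiracharmonicCurvatureCondition}.
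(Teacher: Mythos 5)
Your proposal is correct and follows exactly the route the paper intends (and leaves implicit) for this corollary: rerun the computation of Proposition~\ref{prop:DegenerateSUSY} without the \(S\nabla R\) and \(SR\) contributions, observe that the missing gravitino-variation term leaves the residue \(2\int_M\langle(2\na^s_{e_\beta}q+\gamma(e_\beta)\pd_g q)\otimes\map_*e_\beta,\psi\rangle\dv_g\), which the twistor-spinor hypothesis kills as in Corollary~\ref{cor:DegenerateSUSYDiracHarmonicMapswithCurvature}, and absorb the surviving \(R^N\) term from \(\delta(\D\psi)\) into condition~\eqref{eq:DiracharmonicCurvatureCondition}. The signs and bookkeeping in your argument are consistent with the paper's computations, so nothing is missing.
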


In~\cite{chen2006dirac} two seemingly unrelated critical Dirac-harmonic maps have been constructed:
The trivial solution \((\map_0, 0)\), where \(\map_0\) is a harmonic map and the twistor spinor solution \((\map_0, \psi)\) where \(\psi=-\gamma(e_\alpha) q \otimes \map_*e_\alpha\) is constructed from a twistor spinor \(q\) and the harmonic map \(\map_0\).
We will now show that those two solutions are related via degenerate super symmetry.

Consider the family defined on the time interval \([0,1]\) given by
\begin{align}
	\map_t &= \map_0 &
	\psi_t &= -t \gamma(e_\alpha) q\otimes \map_*e_\alpha
\end{align}
for a twistor spinor \(q\in\Gamma(\SpinorBundleM_g)\).
The family \((\map_t, \psi_t)\) interpolates between the trivial and the twistor spinor solution via a family of degenerate super symmetries.
Indeed for every \(t=\tau\) we have
\begin{align}
	\left.\frac{\dd}{\dd t}\right|_{t=\tau} \map_t &= 0 = \left<q, \psi_\tau\right> &
	\left.\frac{\dd}{\dd t}\right|_{t=\tau} \psi_t &= -\gamma(e_\alpha) q \otimes \map_* e_\alpha = -\gamma(\grad\map) q.
\end{align}
Fortunately the condition~\eqref{eq:DiracharmonicCurvatureCondition} is fulfilled along this family, and hence we conclude that \((\map_t, \psi_t)\) is critical for all \(t\in[0,1]\).
Consequently, we have a critical family of degenerate super symmetries and the twistor spinor solution should be considered equivalent to the trivial solution.

We expect that more non-trivial critical Dirac-harmonic maps can be constructed with the help of super symmetry.
The difficulty lies in the construction of suitable families \((\map_t, \psi_t)\).

\subsection{Conclusion}
We have shown that the functional of Dirac-harmonic maps with gravitino is invariant under the rescaled conformal transformations, super Weyl transformations and diffeomorphisms.
The energy-momentum tensor and the supercurrent of the action have been calculated.
We found that the rescaled conformal invariance prescribes the trace of the energy-momentum tensor, whereas super Weyl invariance assures that \(J\) is a section of \(S_g\otimes_\C TM\) off shell.
The diffeomorphism invariance leads on shell to a coupled differential equation of divergence type involving the energy-momentum tensor and the supercurrent.

Furthermore, we have shown that in the case of vanishing gravitino and under certain conditions on the curvature of the target manifold a degenerate super symmetry leaves the action functional invariant infinitesimally.
The restricting conditions on the curvature and of the vanishing gravitino cannot be lifted in the setting of Dirac-harmonic maps but only in the world of super geometry.
The degenerate super symmetry leads on shell to a divergence-free supercurrent.
Hence, in that case the energy-momentum tensor can be identified with a holomorphic quadratic differential and the supercurrent with an holomorphic section of \(TM\otimes_\C\SpinorBundleM_g\).

Finally, we want to mention that the functional \(\A\) also possesses a \(U(1)\)-gauge symmetry.
A particular special case of this gauge symmetry is
\begin{equation}
	\A(\map,\psi;g,\chi)=\A(\map,-\psi; g,-\chi).
\end{equation}

\section{Appendix}\label{sect:formal divergence}

As we frequently use the divergence operators on various fields, we make a review in this section and explain their relations to Lie derivatives and the Cauchy--Riemann operators.

\subsection{}

Recall that on a Riemannian manifold $(M,g)$ with local coordinates $(x^\al)$, the divergence of a vector field $X=X^\al \frac{\p}{\p x^\al}$ is a function on $M$ defined by
\begin{equation}
	\diverg_g X=\frac{1}{\sqrt{\det g}}\frac{\p}{\p x^\al}\left(\sqrt{\det{g}}X^\al\right)
\end{equation}
where $\det{g}=\det{(g_{\al\be})}$. In terms of the Levi-Civita connection $\na^{LC}$ the divergence operator is expressed by
\begin{equation}
	\diverg_g (X)=\tr_g(\na X)\in C^\infty(M).
\end{equation}
In this way the divergence operator is the negative $L^2$-adjoint operator of the gradient operator: for any $f\in C^\infty(M)$ and any $X\in \Gamma(TM)$,
\begin{equation}
	\int_M \left\langle X,\grad(f)\right\rangle\dv_g=\int_M \left\langle -\diverg_g X,f\right\rangle\dv_g.
\end{equation}

The divergence operator on symmetric two-tensors is defined in a similar way. Let $k\in \Gamma(\Sym(T^*M\otimes T^*M))$, the divergence operator on $k$ is the one-form given by
\begin{equation}
	\diverg_g k= \tr_g(\na k)\coloneqq \sum_{\al=1}^{\dim M} \na_{e_\al} k(e_\al, \cdot)\in \Gamma(T^*M),
\end{equation}
where $(e_\al)$ is a local orthonormal frame.
In local coordinates $(x^\al)$, $k=k_{\al\be}\dd x^\al\otimes\dd x^\be$ with $k_{\al\be}=k_{\be\al}$ and let $K=K^\al_\be \dd x^\be\otimes \frac{\p}{\p x^\al}\in \End(TM)$ be the associated $(1,1)$-tensor,
\begin{equation}
	\diverg_g(k)=\left(\frac{1}{\sqrt{\det{g}}}\frac{\p}{\p x^\al}\left(\sqrt{\det{g}}K^\al_\be\right) - \frac{1}{2}g^{\al\eta}\frac{\p g_{\eta\gamma}}{\p x^\be} K^\gamma_\al\right)\dd x^\be.
\end{equation}
Then we claimed that, for any vector field $X\in\Gamma(TM)$,
\begin{equation}\label{eq:DivergenceLieAdjoint}
	\int_M \langle \Lie_X g, k\rangle\dv_g=-2\int_M \langle X,\diverg_g k\rangle\dv_g.
\end{equation}
Actually, using the local expression above,
\begin{equation}
	\begin{split}
		\int_M (\diverg_g k)(X)\dv_g
		&=\int_M \left(\frac{1}{\sqrt{\det{g}}}\frac{\p}{\p x^\al}\left(\sqrt{\det{g}}K^\al_\be\right) - \frac{1}{2}g^{\al\eta}\frac{\p g_{\eta\gamma}}{\p x^\be}K^\gamma_\al\right)X^\be \dv_g \\
		&=\int_M \frac{\p}{\p x^\al}\left(\sqrt{\det{g}}K^\al_\be X^\be\right)\dd x
			-\int_M K^\al_\be\frac{\p X^\be}{\p x^\al}+\frac{1}{2}g^{\al\eta}\frac{\p g_{\eta\gamma}}{\p x^\be}K^\gamma_\al X^\be \dv_g \\
		&=\int_M \diverg_g (K(X))\dv_g-\frac{1}{2}\int_M \langle k,\Lie_X g\rangle\dv_g.
	\end{split}
\end{equation}
Since $M$ is closed, the divergence theorem implies the first summand in the last line vanishes. Hence the claim is confirmed.

Recall that a symmetric, traceless and divergence-free two-tensor corresponds to a holomorphic quadratic differential on a Riemann surface, see e.g.~\cite[Section 2.4]{tromba2012teichmuller}.

\subsection{}
Now we consider the divergence operators on spinor fields.
Recall from~\cite{bourguignon1992spineurs} that on the spinor bundle $S_g$, the Lie derivative with respect to $X\in\Gamma(TM)$ on a spinor $\sigma\in \Gamma(S_g)$ is related to the spin connection $\na^s$ via
\begin{equation}
	\Lie_X^S\sigma =\na^s_X\sigma-\frac{1}{4}\gamma(\dd X^\flat)\sigma,
\end{equation}
where $X^\flat$ denotes the dual one-form of the vector field $X$ and the 2-form $\dd X^\flat$ acts via Clifford multiplication.
We would like to define a divergence operator on spinor fields such that a formula of the same type as~\eqref{eq:DivergenceLieAdjoint} holds.
This is achieved in the following lemma.
\begin{lemma}%
\label{lemma:SpinDivergence}
	Let $(M,g)$ be a Riemann surface, with almost complex structure $J_M\in \Aut(TM)$.
	For \(\rho\), \(\sigma\in\Gamma(S_g)\), define the divergence of $\rho$ with respect to $\sigma$ as
	\begin{equation}
		\diverg_\sigma(\rho)=\langle\na^s\sigma,\rho\rangle_\sharp+\frac{1}{4}J_M \grad\big(\langle\gamma(\omega)\sigma,\rho\rangle\big) \in \Gamma(TM),
	\end{equation}
	where $\omega$ stands for the volume element in the Clifford bundle.
	The following holds for all vector fields \(X\in\Gamma(TM)\)
	\begin{equation}
		\int_M \langle \Lie_X^S\sigma,\rho\rangle\dv_g=\int_M \langle X,\diverg_\sigma(\rho)\rangle\dv_g.
	\end{equation}

\end{lemma}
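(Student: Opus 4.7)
The plan is to expand the spinor Lie derivative using the given formula
\[
    \Lie_X^S\sigma = \na^s_X\sigma - \frac{1}{4}\gamma(\dd X^\flat)\sigma,
\]
process the two resulting integrals separately, and in each case rewrite the integrand as $g(X, \cdot)$ for a suitable vector field.

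For the first term, the identification is immediate: by definition of the musical isomorphism, the 1-form $X\mapsto \langle \na^s_X\sigma,\rho\rangle$ is dual to the vector field $\langle \na^s\sigma,\rho\rangle_\sharp$, whose $\alpha$-component in a local orthonormal frame $(e_\alpha)$ is $\langle \na^s_{e_\alpha}\sigma,\rho\rangle$. So $\int_M\langle \na^s_X\sigma,\rho\rangle\dv_g = \int_M g(X,\langle\na^s\sigma,\rho\rangle_\sharp)\dv_g$ without any integration by parts.

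The heart of the argument is the second term, where I would exploit the two-dimensionality of $M$. In an oriented local orthonormal frame $(e_1,e_2)$, the 2-form $\dd X^\flat$ is a scalar multiple of the volume element, concretely $\dd X^\flat = c(X)\,\omega$ with $c(X) = g(\na_{e_1}X,e_2) - g(\na_{e_2}X,e_1)$. Hence $\gamma(\dd X^\flat)\sigma = c(X)\gamma(\omega)\sigma$, and setting $h \coloneqq \langle \gamma(\omega)\sigma,\rho\rangle$ gives
\[
    -\frac{1}{4}\int_M \langle\gamma(\dd X^\flat)\sigma,\rho\rangle \dv_g = -\frac{1}{4}\int_M c(X)\, h\, \dv_g.
\]
Next, since on an oriented Riemannian surface the almost complex structure $J_M$ is parallel with respect to the Levi-Civita connection and satisfies $g(J_MY,Z) = -g(Y,J_MZ)$, a short calculation shows that $c(X) = -\diverg_g(J_MX)$. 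Integration by parts then yields
\[
    -\frac{1}{4}\int_M c(X)\,h\,\dv_g = \frac{1}{4}\int_M \diverg_g(J_MX)\, h\,\dv_g = -\frac{1}{4}\int_M g(J_MX,\grad h)\,\dv_g = \frac{1}{4}\int_M g(X, J_M\grad h)\,\dv_g,
\]
where the last step uses skew-adjointness of $J_M$. Summing the two contributions recovers the claimed formula with $\diverg_\sigma(\rho) = \langle \na^s\sigma,\rho\rangle_\sharp + \frac{1}{4}J_M\grad(\langle\gamma(\omega)\sigma,\rho\rangle)$.

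The main subtlety I anticipate is sign bookkeeping. There are three potentially confusing sign conventions that have to line up: the identity $g(J_MX,Y) = \dv_g(X,Y)$ that defines $J_M$, the sign in $\gamma(\omega)$ relative to the Clifford relation used in this paper (which is negative-definite, as noted in the footnote about $\Cl(M,-g)$), and the sign convention for $\dd X^\flat$ as a skew bilinear form. The computation of $c(X) = -\diverg_g(J_MX)$ in particular depends on whether $J_Me_1 = e_2$ or $-e_2$ in the chosen orientation. I would perform that calculation in a fixed oriented orthonormal frame at the outset and propagate the sign consistently; all other steps (integration by parts on the closed surface $M$, duality of gradient and divergence for functions) are standard.
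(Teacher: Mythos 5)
Your proof is correct, and it follows the same skeleton as the paper's: expand $\Lie_X^S\sigma$ via the Bourguignon--Gauduchon formula, dispatch the $\na^s_X\sigma$ term by pure duality (no integration by parts, exactly as the paper does), and use two-dimensionality to write $\gamma(\dd X^\flat)$ as a function times $\gamma(\omega)$ before integrating by parts. The one step you execute differently is the conversion of $-\tfrac14\int_M c(X)\,h\,\dv_g$ into $\tfrac14\int_M g\left(X, J_M\grad h\right)\dv_g$, where $h=\langle\gamma(\omega)\sigma,\rho\rangle$ and $c(X)=\dd X^\flat(e_1,e_2)$: the paper works in isothermal coordinates, integrates by parts in coordinates, recognizes the result as $\int_M\langle *X^\flat,\dd h\rangle\dv_g$, and then identifies $(*X^\flat)^\sharp=J_M X$; you instead stay coordinate-free, proving the identity $c(X)=g(\na_{e_1}X,e_2)-g(\na_{e_2}X,e_1)=-\diverg_g(J_M X)$ from parallelism and skew-adjointness of $J_M$, and then applying the divergence theorem together with skew-adjointness once more. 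Both are valid; your route makes the appearance of $J_M$ conceptually transparent and avoids coordinates, while the paper's computation is more elementary and self-contained in that it needs only the coordinate formula for $\dd$ and the Hodge star. The sign worry you flagged resolves correctly: the paper's convention $g(J_M X,Y)=\dv_g(X,Y)$ forces $J_M e_1=e_2$ in an oriented orthonormal frame, which is precisely what your identity $c(X)=-\diverg_g(J_M X)$ requires, and with that choice all signs agree with the statement of the lemma.
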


\begin{proof}
	Take local isothermal coordinates $(x^\al)$ and write $X=X^\al\frac{\p}{\p x^\al}$.
	Then $X^\flat=X^\al g_{\al\be}\dd x^\be\equiv X_\be \dd x^\be$.
	Since
	\begin{equation}
		\dd X^\flat=\left(\frac{\p X_2}{\p x^1}-\frac{\p X_1}{\p x^2}\right)\dd x^1\wedge \dd x^2,
	\end{equation}
	one sees that
	\begin{equation}
		\gamma(\dd X^\flat)=\left(\frac{\p X_2}{\p x^1}-\frac{\p X_1}{\p x^2}\right)\frac{1}{\sqrt{\det{g}}}\gamma(\omega).
	\end{equation}
	Thus,
	\begin{equation}
		\begin{split}
		\int_M \langle\gamma(\dd X^\flat)\sigma, \rho\rangle\dv_g
		=&\int_M \left\langle\left(\frac{\p X_2}{\p x^1}-\frac{\p X_1}{\p x^2}\right)\frac{1}{\sqrt{\det{g}}}\gamma(\omega)\sigma,\rho\right\rangle \sqrt{\det{g}}\dd x \\
		=&\int_M \frac{\p X_2}{\p x^1}\langle\gamma(\omega)\sigma,\rho\rangle-\frac{\p X_1}{\p x^2}\langle\gamma(\omega)\sigma, \rho\rangle \dd x \\
		=&\int_M -X_2\frac{\p}{\p x^1}\left(\langle\gamma(\omega)\sigma,\rho\rangle\right)
			+X_1\frac{\p}{\p x^2}\left(\langle\gamma(\omega)\sigma,\rho\rangle\right) \dd x \\
		=&\int_M \big\langle *X^\flat, \dd\langle\gamma(\omega)\sigma,\rho\rangle\big\rangle \dv_g \\
		=&\int_M \big\langle J_M X,\grad(\langle\gamma(\omega)\sigma,\rho\rangle)\big\rangle \dv_g \\
		=&\int_M \big\langle X,-J_M \grad(\langle\gamma(\omega)\sigma,\rho\rangle)\big\rangle \dv_g.
		\end{split}
	\end{equation}
	Therefore,
	\begin{equation}
		\begin{split}
			\int_M \langle\Lie^S_X\sigma,\rho\rangle\dv_g
			&=\int_M \langle \na^s_X\sigma,\rho\rangle-\frac{1}{4}\langle\gamma(\dd X^\flat)\sigma,\rho\rangle \dv_g \\
			&=\int_M \big\langle X, \langle\na^s\sigma,\rho\rangle_{\sharp}
				+\frac{1}{4}J_M \grad(\langle\gamma(\omega)\sigma,\rho\rangle)\big\rangle \dv_g.
		\end{split}
	\end{equation}
\end{proof}
\begin{remark}
	In contrast to the divergence operators defined for the vector fields and for the symmetric 2-tensors, this divergence operator $\diverg_\sigma(\rho)$ doesn't involve derivatives of $\rho$, but only derivatives of $\sigma$.
	In this sense, it is only a formal ``divergence'' operator.
\end{remark}

\subsection{}
Next we also need to consider the divergence operators defined on the tensor product bundle $S_g\otimes TM$.
Let $\varphi=\varphi^\al\otimes e_\al\in\Gamma(S_g\otimes TM)$ be a gravitino. In a local orthonormal frame $(e_\al)$, the $g$-divergence operator is
\begin{equation}
	\diverg_g(\varphi)\coloneqq \tr_g\left(\widehat{\na} \varphi\right)=\sum_{\al}\left<\widehat{\na}_{e_\al}\varphi, e_\al\right>\in\Gamma(S_g),
\end{equation}
where we use $\widehat{\na}$ to denote the connection of $S_g\otimes TM$.
For any spinor field $q\in\Gamma(S_g)$, using integration by parts,
\begin{equation}
	\begin{split}
		\int_M \langle q,\diverg_g \varphi\rangle\dv_g
		&=\int_M \left< q, \langle \widehat{\na}_{e_\al}\varphi,e_\al\rangle \right>\dv_g \\
		&=\int_M \left< q\otimes e_\al, \widehat{\na}_{e_\al}\varphi\right> \dv_g \\
		&=-\int_M \left<\na^s_{e_\al}q\otimes e_\al, \varphi\right>\dv_g\\
		&=-\int_M \left< {(\na^s q)}_\sharp, \varphi \right>\dv_g
	\end{split}
\end{equation}
Note that $P\varphi=0$ implies that $\varphi$ is a smooth section of the complex vector bundle $S_g\otimes_\C TM$. If in addition $\diverg_g \varphi=0$, then $\varphi$ is then holomorphic, which is to say, $\varphi^\vee\equiv \varphi_\al\otimes e^\al\in\Gamma(S_g^\vee\otimes T^*M) $ is a holomorphic section.

We also need the $\chi$-divergence of $\varphi$ for a gravitino field $\chi\in\Gamma(S_g\otimes TM)$.
It is formally defined to make the following identity hold:
\begin{equation}
	\int_M \langle \Lie^{S_g\otimes TM}_X \chi, \varphi\rangle\dv_g=\int_M \langle X, \diverg_\chi \varphi\rangle\dv_g
\end{equation}
This can be assured using the Riesz representation theorem.

\end{document}